\documentclass[a4paper, 11pt]{article}
\usepackage[arxiv]{optional}

\usepackage[english]{babel}
\usepackage[T1]{fontenc}
\usepackage{amsmath}
\opt{arxiv}{%
\usepackage{amsthm}
}%
\usepackage{amsfonts}
\usepackage{amssymb}
\usepackage[final]{graphicx}
\usepackage{paralist}
\usepackage{hyperref}
\usepackage{tikz}

\usepackage{cleveref}
\usepackage{subfigure}
\usepackage{pifont}
\usepackage{caption}
\usepackage{mathtools}

\opt{draft}{%
\usepackage{refcheck}

}

\opt{arxiv}{%
% Theorem like environments
% star = no numbering
\theoremstyle{plain}
\newtheorem{thm}{Theorem}[section]
\newtheorem{prop}[thm]{Proposition}
\newtheorem*{prop*}{Proposition}
\newtheorem{lem}[thm]{Lemma}
\newtheorem*{lem*}{Lemma}
\newtheorem{cor}[thm]{Corollary}
\newtheorem*{cor*}{Corollary}

\newtheorem*{example*}{Example}

\newtheorem*{conject*}{Conjecture}

\theoremstyle{definition}

\newtheorem*{defn*}{Definition}
\newtheorem{rem}[thm]{Remark}
\newtheorem*{rem*}{Remark}
}%
\opt{springer}{%

}%

% shortcuts
\newcommand{\eps}{\varepsilon}
\renewcommand{\emptyset}{\varnothing}

% Classy Mathoperators

\DeclareMathOperator{\im}{Im}
\DeclareMathOperator{\ind}{ind}

\DeclareMathOperator{\re}{Re}

% Units

% Work around QED symbol issues
\opt{springer}{%
\newcommand{\eop}{\qed}
}
\opt{arxiv}{%
\newcommand{\eop}{}
}

% absolute value

% norm

% Number sets
\newcommand{\R}{\ensuremath{\mathbb{R}}}
\newcommand{\C}{\ensuremath{\mathbb{C}}}

% Cardinality

% ':=' defined by symbol
\newcommand{\defby}{\mathrel{\mathop:}=}

% '=:' defines symbol

% Complex conjugate
\newcommand{\conj}[1]{\overline{#1}}

% Jacobian and partial derivatives
%\newcommand{\jac}{D}

\newcommand{\CC}{\ensuremath{\mathcal{C}}}
\newcommand{\Aa}{\ensuremath{\mathcal{A}}}

\DeclareMathOperator{\intx}{int}

\usepackage{color}

\begin{document}

\title{How constant shifts affect the zeros of certain rational harmonic functions}

\date{May 16, 2018}

\author{J\"org Liesen\footnotemark[1] \and Jan Zur\footnotemark[1]}

\maketitle

\renewcommand{\thefootnote}{\fnsymbol{footnote}}

\footnotetext[1]{TU Berlin, Institute of Mathematics, MA 3-3, Stra{\ss}e des 17. Juni 136, 10623 Berlin, Germany.
\texttt{\{liesen,zur\}@math.tu-berlin.de}}

\renewcommand{\thefootnote}{\arabic{footnote}}

\begin{abstract}
We study the effect of constant shifts on the zeros of rational harmomic functions
$f(z) = r(z) - \conj{z}$. In particular,
we characterize how shifting through the caustics of $f$ changes the number of
zeros and their respective orientations. This also yields insight into the nature
of the singular zeros of $f$. Our results have applications in gravitational lensing theory, 
where certain such functions $f$ represent gravitational point-mass lenses, and a constant
shift can be interpreted as the position of the light source of the lens.
\end{abstract}
\paragraph*{Keywords:}
Rational harmonic functions; 
Gravitational lensing; 
Critical curve and caustic; 
Cusp and fold points; 
Singular zeros
\paragraph*{AMS Subject Classification (2010):}
30D05, 31A05, 85A04

\section{Introduction}\label{sect:introduction}

The number and location of the zeros of rational harmonic functions of the form
\begin{align}\label{eq:f}
f(z) = r(z) - \conj{z},
\end{align}
where $r$ is a rational function, have been intensively studied in recent years. 
An important result of Khavinson and Neumann~\cite{KhavinsonNeumann2006} says that 
if $\deg(r)\geq 2$, then $f$ may have at most $5\deg(r)-5$ zeros. As shown by a 
construction of Rhie~\cite{Rhie2003}, this bound on the maximal number of zeros 
is sharp in the sense that for every $n\geq 2$ there exists a rational harmonic 
function as in \eqref{eq:f} with $n=\deg(r)$ and exactly $5n-5$ zeros. 
Several authors have derived more refined bounds on the maximal
number of zeros which depend on the degrees of the numerator and denominator
polynomials of~$r$; see, e.g.,~\cite{LiesenZur2018} and the references given there.

Rhie made her construction in the context of astrophysics, where certain rational
harmonic functions model gravitational lenses based on $n$ point-masses; 
see the Introduction of~\cite{SeteLuceLiesen2015a} for a brief summary of 
Rhie's construction, and~\cite{LuceSeteLiesen2014a} for a detailed analysis. 
Descriptions of the connection between complex analysis and gravitational lensing 
are given, for example, in the 
articles~\cite{DanekHeyrovsky2015,KhavinsonNeumann2008,Petters2010,PettersWerner2010},
and a comprehensive treatment can be found 
in the monographs~\cite{PettersLevineWambsganss2001,SchneiderEhlersFalco1999}.
The function modeling the gravitational point-mass lens is
a special case of \eqref{eq:f}, namely
\begin{equation}\label{eq:r}
f(z) = \conj{z}-r(z),\quad\mbox{where}\quad
r(z)=\sum_{k=1}^n \frac{m_k}{z-z_k}.
\end{equation}
The poles $z_1,\dots,z_n\in\C$ represent the position of the respective point-masses 
$m_1,\dots,m_n>0$ in the lens plane. For a fixed $\eta\in\C$, a solution of $f(z)=\eta$,
or equivalently a zero of $f_\eta(z)=f(z)-\eta$, represents a lensed image
of a light source at the position $\eta$ in the source plane. Of great importance 
in this application is the behavior of the zeros under movements of the light source, 
i.e., changes of the parameter $\eta$. Using explicit computations, Schneider and Weiss 
studied this behavior for two point-masses, i.e., $n=2$ in \eqref{eq:r}, in their frequently
cited paper~\cite{SchneiderWeiss1986}. The same model was analyzed extensively 
by Witt and Petters~\cite{WittPetters1993}. Schneider, Ehlers and Falco pointed out 
in~\cite[p.~265]{SchneiderEhlersFalco1999}, that the two point-mass lens is already 
fairly complicated to analyze in detail.  Petters, Levine and Wambsganss gave a more 
general analysis in~\cite[Part~III]{PettersLevineWambsganss2001} based on the Taylor 
series of the gravitational lens potential associated with the lensing map $z\mapsto f_\eta(z)$. 
By truncating the Taylor series and neglecting higher order terms, 
they obtained an approximation to the lensing map's local quantitative 
behavior in~\cite[Section~9.2]{PettersLevineWambsganss2001}.

In this paper we give a rigorous analysis of the effect of varying 
the parameter $\eta$ on the zeros of rational harmonic functions 
of the form
$$f_\eta(z)=f(z)-\eta,\quad \mbox{where $f$ is as in \eqref{eq:f}.}$$
In particular, we study 
the behavior of the zeros when $\eta$ crosses a caustic of $f$ (see 
Section~\ref{sect:mathematical_background} for a definition of this term). 
Apart from advancing the overall understanding of rational harmonic functions,
our goal is to confirm and generalize the above mentioned results published 
in the astrophysics literature. One of the consequences of our findings 
is that may change the number of zeros of $f_\eta$  
by $4\deg(r)-6$. Thus, the effect of varying $\eta$ is considerably
different from the effect of perturbing $f$ by poles that was studied 
in~\cite{SeteLuceLiesen2015a}. 

The paper is organized as follows. In \Cref{sect:mathematical_background} we
discuss the mathematical background, in particular the
critical curves, caustics, and exceptional points (zeros and poles) of $f$.
In \Cref{sect:main_section_1} we focus on constant shifts that do not
affect the number of zeros. Our main results are contained in \Cref{sect:main_section_2},
where we study in detail how shifting $\eta$ across a caustic of $f$ affects
the zeros. Here we distinguish between shifting through fold and cusp points
of $f$. Our results on shifts also yield some insight 
into the nature of the singular zeros of $f$. In \Cref{sect:examples} we give 
examples that illustrate our results and a brief outlook on possible 
extensions and further work in this area.

\section{Critical curves, caustics and the Poincar\'e index}\label{sect:mathematical_background}

Let a rational harmonic function $f(z) = r(z) - \conj{z}$ with $\deg(r)\geq 2$ be given. Using
the Wirtin\-ger derivatives $\partial_z$ and $\partial_{\conj{z}}$ we can write the \emph{Jacobian}
of $f$ as
$$J_f(z) = |\partial_z f(z)|^2-|\partial_{\conj{z}} f(z)|^2=|r'(z)|^2-1.$$
The points $z\in \C$ where $J_f$ vanishes, i.e., where $|r'(z)| = 1$, are called the \emph{critical points} of $f$.
We denote the set of the critical points by $\CC$. The critical points of $f$ are the preimages of the unit circle
$|w|=1$ under the map $w=r'(z)$, which is analytic (and non-constant) in $\C$, except at the finitely many poles
of $r'(z)$. Thus, the critical points form finitely many closed curves that separate the complex plane into regions
where $J_f(z)>0$ and hence $f$ is \emph{sense-preserving}, and $J_f(z)<0$ and hence $f$ is \emph{sense-reversing}.
We denote these regions by $\Omega_+$ and $\Omega_{-}$, respectively, so that we have the disjoint partitioning $\C = \CC \cup \Omega_+ \cup \Omega_-$.
Each closed curve in the set $\CC$ is called a \emph{critical curve} of $f$.

The necessary condition for a stationary point of the Jacobian of $f$ is
$$\partial_{\conj{z}} J_f(z)=r'(z)\conj{r''(z)}=0,$$
and hence the condition $r''(z) \neq 0$ for all $z\in\CC$ implies that no critical point of $f$ is a saddle-point
of $J_f$. Then the critical curves of $f$ are smooth Jordan curves, and in particular they do not intersect
each other; see the left plot in Figure~\ref{fig:dummyfig} for an example. A function $f$ with this property
is called \emph{non-degenerate}, and in the following we will always assume
that the given $f$ is such a function.

In this case the critical curves yield a disjoint partitioning of
$\Omega_+\cup \Omega_-$ into finitely many
open and connected subsets $A_1,\dots,A_m$, where $\partial A_j \subseteq \CC$, and
either $A_j\subseteq \Omega_+$ or $A_j\subseteq \Omega_{-}$, for $j=1,\dots,m$, and we write
\begin{equation}\label{eq:setA}
\Aa \defby \left\{A_1,\dots, A_m\right\}.
%\{A \subset \Omega_+ : \partial A\subset\CC\} \cup \{A \subset \Omega_- : \partial A\subset\CC\}.
\end{equation}
Exactly one of the sets $A_j$ is unbounded, and we sometimes denote this set by $A_\infty$.
On the two bordered sets of a given critical curve is $f$ always differently oriented. This is a consequence
of the maximum modulus principle applied to the functions $r'$ and $1/r'$.

\begin{figure}
\begin{center}
\includegraphics[width=1.0\textwidth]{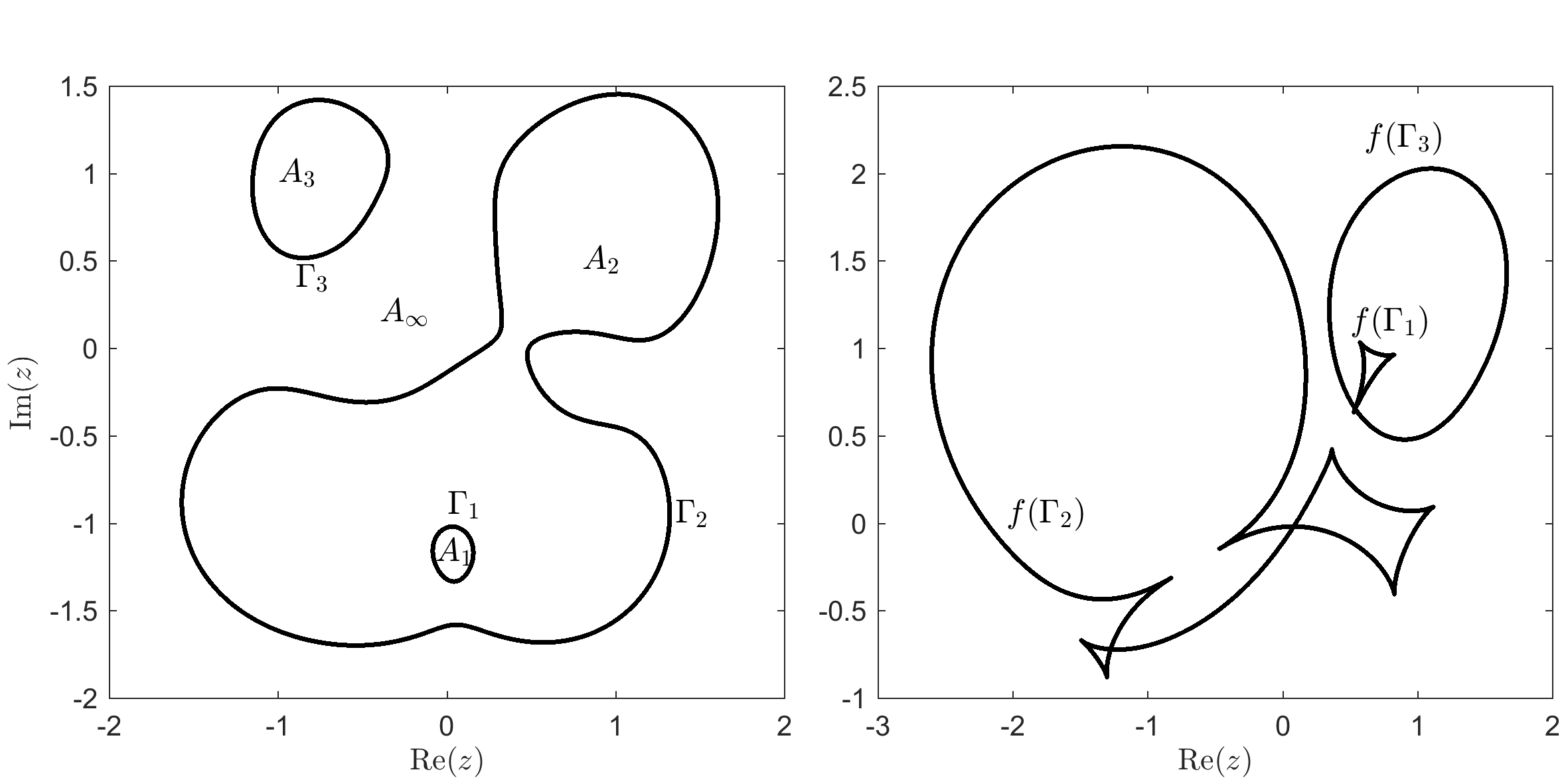}
\caption{Critical curves $\Gamma_i$ and $\mathcal{A}$ (left), caustics $f(\Gamma_i)$ (right)}\label{fig:dummyfig}
\end{center}
\end{figure}

The elements of the set $f(\CC)$ are called the \emph{caustic points} of $f$, and for each critical curve $\Gamma$,
the curve $f(\Gamma)$ is called a \emph{caustic} of $f$. Unlike a critical curve, a caustic of $f$ may intersect itself
as well as other caustics of $f$, and a caustic of $f$ need not be smooth; see the right plot of
Figure~\ref{fig:dummyfig} for examples.

The singularities on a caustic of $f$ are called \emph{cusp points}, and all other caustic points of $f$
are called \emph{fold points}; cf. \cite[p.~88]{PettersLevineWambsganss2001}. In order to characterize a
cusp point, note that the unique tangent at a critical point $z_0 \in \CC$ is given by
\begin{align}\label{eq:tangent}
g(t) = h t+z_0,\quad h \defby \frac{ir'(z_0)\conj{r''(z_0)}}{|r'(z_0)\conj{r''(z_0)}|},
\end{align}
where we use that the gradient of the Jacobian is orthogonal with respect to its contour 
line, i.e., the critical curve, and where the normalization of the direction~$h$ will
be convenient in our derivations in Section~\ref{sect:main_section_2}. The linearization of $f$ at $z_0\in\CC$ has the form
$$L_{z_0}(z) \defby r(z_0) + e^{i\varphi}(z - z_0) - \conj{z},$$ 
where we use that $r'(z_0) = e^{i\varphi}$ for some $\varphi \in [0,2\pi)$. After some small manipulations we obtain
\begin{align*}
L_{z_0}(g(t))=2 i e^{i\varphi/2}\im(e^{i\varphi/2} h)t+(r(z_0)+e^{i\varphi}z_0-\conj{z}_0),
\end{align*}
which shows that the tangent direction at the caustic point $f(z_0)\in f(\CC)$ is given
by $ie^{i\varphi/2}$. Moreover, $L_{z_0}$ maps the tangent at the critical curve to a 
single point if and only if 
$$\im(e^{i\varphi/2}h) = \im(ie^{3i\varphi/2}\conj{r''(z_0)})=0,$$ 
or, equivalently,
\begin{align}\label{eq:cusp_condition}
\re\left(\frac{r''(z_0)}{r'(z_0)^{3/2}}\right) = 0 \quad\Longleftrightarrow\quad
\re\left(\frac{r'(z_0)^{3/2}}{r''(z_0)}\right) = 0,
\end{align}
where the equivalence is defined since we assume that $f$ is non-degenerate.
Let us summarize these considerations.

\begin{lem}\label{lem:cusp}
Let $z_0\in\CC$ be a critical point of $f(z)=r(z)-\conj{z}$. Then the caustic point $f(z_0) \in f(\CC)$ is a cusp point if and only if \eqref{eq:cusp_condition} holds. 
(Each other caustic point $f(z_0)\in f(\CC)$ is called a fold point.)
\end{lem}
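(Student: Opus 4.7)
The lemma is essentially a consolidation of the computation laid out in the paragraphs preceding it, so my plan is to organize that computation into a clean chain of equivalences and justify the geometric criterion used to detect a cusp.

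My starting point is the observation that at a critical point $z_0$, the differential $df_{z_0}$ has rank one (because $|r'(z_0)|=1$ and $r'(z_0)\neq 0$), so the critical curve is smooth near $z_0$ and its image $f(\CC)$ fails to be smooth at $f(z_0)$ precisely when the tangent line to the critical curve is mapped to a single point by the linearization $L_{z_0}$; otherwise $f$ restricted to the critical curve is a local embedding near $z_0$ and gives a smooth fold point. This is the geometric definition of a cusp that I will take as the working criterion, and it is the one tacitly used in the derivation above. Stating this equivalence carefully is the main conceptual step; the rest is calculation.

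With this criterion in hand, I will follow the three-step derivation already spelled out. First, I will justify formula~\eqref{eq:tangent} for the tangent direction~$h$: since $\CC$ is a level set $\{J_f=0\}$, the tangent is orthogonal to $\nabla J_f$, and using $\partial_{\conj z}J_f(z)=r'(z)\conj{r''(z)}$ together with the standard rule that rotating the Wirtinger gradient by $i$ yields the tangent direction gives $h = ir'(z_0)\conj{r''(z_0)}/|r'(z_0)\conj{r''(z_0)}|$; non-degeneracy ensures $r''(z_0)\neq 0$ so this is well defined. Second, I will plug $g(t) = ht + z_0$ into $L_{z_0}(z) = r(z_0) + e^{i\varphi}(z-z_0) - \conj{z}$ with $r'(z_0) = e^{i\varphi}$ and simplify $e^{i\varphi}h - \conj{h} = 2ie^{i\varphi/2}\im(e^{i\varphi/2}h)$ to obtain the expression for $L_{z_0}(g(t))$ displayed in the excerpt.

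The cusp criterion then forces the coefficient $2ie^{i\varphi/2}\im(e^{i\varphi/2}h)$ to vanish, i.e. $\im(e^{i\varphi/2}h) = 0$. Substituting the formula for $h$ gives $\im(ie^{3i\varphi/2}\conj{r''(z_0)}) = 0$, which upon dividing by $|r''(z_0)|$ and using $e^{i\varphi} = r'(z_0)$ and the convention $r'(z_0)^{3/2} = e^{3i\varphi/2}$ becomes $\re(r''(z_0)/r'(z_0)^{3/2}) = 0$. The equivalent form $\re(r'(z_0)^{3/2}/r''(z_0)) = 0$ follows because for any nonzero $w\in\C$, $\re(w) = 0$ iff $\re(1/w) = 0$, and here $r''(z_0)\neq 0$ by non-degeneracy.

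The only real subtlety I expect is a branch choice in writing $e^{3i\varphi/2} = r'(z_0)^{3/2}$: the condition \eqref{eq:cusp_condition} is insensitive to the choice of square root of $r'(z_0)$ since replacing $r'(z_0)^{1/2}$ by $-r'(z_0)^{1/2}$ flips the sign of $r'(z_0)^{3/2}$ and hence does not change the vanishing of the real part. I will mention this briefly so that \eqref{eq:cusp_condition} reads unambiguously. Once that is noted, the proof is just the sequence of implications above.
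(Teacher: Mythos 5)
Your proposal is correct and follows essentially the same route as the paper, which likewise proves the lemma by the computation preceding it: the tangent direction $h$ from the gradient of $J_f$, the linearization $L_{z_0}(g(t))$, and the vanishing of $\im(e^{i\varphi/2}h)$ rewritten as \eqref{eq:cusp_condition}. Your explicit remarks on the geometric cusp criterion and on the irrelevance of the branch choice for $r'(z_0)^{1/2}$ are sensible additions but do not change the argument.
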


Petters and Witt~\cite{PettersWitt1996} showed that if $r$ is as in \eqref{eq:r}, then 
there can be at most $12n(n-1)$ cusp points; see 
also~\cite[Section~15.3.3]{PettersLevineWambsganss2001}. The determination of a sharp 
upper bound on the number of cusp points was mentioned as an open research problem 
in~\cite{Petters2010}. The relation between the number of cusp points
and the number of zeros for harmonic \emph{polynomials} was recently studied in \cite{KhavinsonLeeSaez2015}.

Now let $z_0\in\C$ be such that $f(z_0)=r(z_0)-\conj{z}_0=0$. We call $z_0$ a \emph{sense-preserving},
\emph{sense-reversing}, or \emph{singular zero} of $f$, if $z_0$ is an element of $\Omega_+$, $\Omega_{-}$,
or $\CC$, respectively. Note that if $z_0$ is a sense-preserving or sense-reversing zero,
then there exists an $\eps>0$ such that $f$ is sense-preserving or sense-reversing, respectively,
on $B_{\eps}(z_0)$, the open disk around $z_0$ with radius $\eps$.
The sense-preserving and sense-reversing zeros of $f$ are also called the \emph{regular zeros}
of $f$. If $f$ has only such zeros, $f$ is called \emph{regular}, and otherwise $f$ 
is called \emph{singular}.

We have the following simple but important relation between singular zeros and caustic points.

\begin{prop}\label{prop:critcaus}
Let $f(z) = r(z) - \conj{z}$ with $\deg(r)\geq 2$ and $\eta \in \C$ be given.
Then $f_\eta(z) \defby f(z) - \eta$ has a singular zero if and only if $\eta$
is a caustic point of $f$.
\end{prop}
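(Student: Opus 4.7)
The plan is to unpack the definitions and observe that the critical set is invariant under a constant shift, which reduces the statement to a tautology.

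First I would note that since $\eta\in\C$ is a constant, the Wirtinger derivatives of $f_\eta$ and $f$ agree: $\partial_z f_\eta = r'(z) = \partial_z f$ and $\partial_{\conj z} f_\eta = -1 = \partial_{\conj z} f$. Consequently, $J_{f_\eta}(z) = |r'(z)|^2 - 1 = J_f(z)$, so $f$ and $f_\eta$ share the same critical set $\CC$ and the same sets $\Omega_+$, $\Omega_-$. In particular, the notion of a singular zero of $f_\eta$ (a zero lying in $\CC$) refers to the critical set of $f$.

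Next I would chase definitions in both directions. For the forward direction, suppose $f_\eta$ has a singular zero $z_0$. By definition this means $z_0 \in \CC$ and $f_\eta(z_0) = f(z_0) - \eta = 0$, so $\eta = f(z_0) \in f(\CC)$, i.e., $\eta$ is a caustic point of $f$. For the converse, if $\eta$ is a caustic point of $f$, then $\eta = f(z_0)$ for some $z_0 \in \CC$; but then $f_\eta(z_0) = f(z_0) - \eta = 0$, and since $z_0 \in \CC$, this zero is singular.

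There is no real obstacle here; the statement is essentially a direct consequence of the definitions together with the elementary observation that adding a constant does not change the Jacobian. The only thing worth highlighting explicitly in the writeup is this invariance of $\CC$ under the shift $f \mapsto f_\eta$, since it is what legitimizes speaking of caustic points of $f$ rather than of $f_\eta$ when discussing singular zeros of the latter.
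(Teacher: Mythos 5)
Your proof is correct and follows essentially the same definition-chasing argument as the paper's, with the additional (welcome but implicit in the paper) observation that a constant shift leaves the Jacobian, and hence the critical set $\CC$, unchanged. No issues.
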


\begin{proof}
If $z_0\in\C$ is a singular zero of $f_\eta$, then $z_0\in\CC$ and $f(z_0)-\eta=0$, or
$f(z_0)=\eta$, which means that $\eta$ is a caustic point of $f$. On the other hand, if
$\eta\in\C$ is a caustic point of $f$, then $f(z_0)=\eta$ for some $z_0\in\CC$, which means
that $z_0$ is a singular zero of $f_\eta$.
\eop
\end{proof}

Sometimes we will use the contraposition of the statement of Proposition~\ref{prop:critcaus}:
If $\eta\in \C$ is \emph{not} on a caustic of $f$, i.e., $\eta\notin f(\CC)$, then
the shifted function $f_\eta(z)=f(z)-\eta$ does \emph{not} have a singular zero
and hence is regular.

Let us briefly recall the argument principle for continuous functions;
see~\cite[Corollary~2.6]{Balk1991},~\cite[Theorem 2.2]{SuffridgeThompson2000},
or~\cite[Section~2]{SeteLuceLiesen2015a} for more details.
Let $\Gamma$ be a closed Jordan curve, and let $f$ be a function
that is continuous and nonzero on $\Gamma$. Then the \emph{winding} of $f$ on~$\Gamma$ is defined as the
change in the argument of $f(z)$ as $z$ travels once around $\Gamma$ in the positive direction,
divided by $2\pi$, i.e., 
$$V(f;\Gamma) \defby \frac{1}{2\pi}\Delta_\Gamma \arg f(z).$$
A point $z_0\in\C$ is called an \emph{exceptional point} of a function $f$, if $f$ is either zero,
not continuous, or not defined at $z_0$. If $f$ is continuous and nonzero in a punctured neighborhood $D$ of
an exceptional point $z_0$, and hence the exceptional point $z_0$ is isolated, then the \emph{Poincar\'e index}
of $f$ at $z_0$ is defined as $\ind(f;z_0)\defby V(f;\Gamma)$, where $\Gamma$ is an arbitrary
closed Jordan curve in $D$ and around $z_0$. This can be seen as a generalization
of the order of a zero or a pole of a meromorphic function; cf. \cite[Example~2.5]{SeteLuceLiesen2015a}.
The Poincar\'e index is independent of the choice of the Jordan curve $\Gamma$, as long as $z_0$ is
the only exceptional point of $f$ in $\intx(\Gamma)$, the interior of $\Gamma$.
If there are several (isolated) exceptional points
in $\intx(\Gamma)$, we have the following theorem.

\begin{thm}\label{th:winding}
If $\Gamma$ is a closed Jordan curve and the function $f$ is continuous and
nonzero on $\overline{\intx(\Gamma)}$ except for finitely many exceptional points
$z_1,\dots,z_k \in \intx(\Gamma)$, then
\begin{align*}
V(f;\Gamma) = \sum\limits_{j = 1}^k \ind(f;z_j).
\end{align*}
\end{thm}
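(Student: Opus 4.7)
The plan is to reduce to the definition of the Poincar\'e index by isolating each exceptional point with a small Jordan curve and showing that the winding of $f$ on $\Gamma$ equals the sum of windings on these small curves. Since the exceptional points are finitely many and isolated, we can choose pairwise disjoint closed Jordan curves $\gamma_1,\dots,\gamma_k\subset \intx(\Gamma)$, e.g.\ sufficiently small circles, such that each $\gamma_j$ surrounds $z_j$ and contains no other exceptional point in its closed interior. By the definition of the Poincar\'e index, $V(f;\gamma_j)=\ind(f;z_j)$ for every $j$.

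Next, I would connect $\Gamma$ to each $\gamma_j$ by pairwise disjoint simple arcs $\alpha_1,\dots,\alpha_k$ lying in $\intx(\Gamma)\setminus\bigcup_j\intx(\gamma_j)$ and avoiding all $z_j$. Cutting along these arcs produces a simply connected domain $\Omega$ whose positively oriented boundary traverses $\Gamma$ once positively, each $\gamma_j$ once negatively, and each arc $\alpha_j$ twice in opposite directions. By construction, $f$ is continuous and nonzero on $\overline{\Omega}$.

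The key step is then to invoke the fact that a continuous, nonvanishing function on a simply connected domain in $\C$ admits a continuous branch of the argument; this follows from the lifting property of the covering map $\exp\colon \C\to\C\setminus\{0\}$ applied to $f$. Consequently, the total change of $\arg f$ along the positively oriented boundary of $\Omega$ is zero. The contributions of the cut arcs $\alpha_j$ cancel pairwise, leaving
\begin{equation*}
\Delta_\Gamma \arg f(z) - \sum_{j=1}^k \Delta_{\gamma_j}\arg f(z) = 0.
\end{equation*}
Dividing by $2\pi$ yields $V(f;\Gamma)=\sum_{j=1}^k V(f;\gamma_j)=\sum_{j=1}^k \ind(f;z_j)$, as claimed.

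The main obstacle is the first step in the previous paragraph: since $f$ is only assumed to be continuous (not holomorphic), one cannot appeal to the classical argument principle or to power series expansions. One must therefore justify the existence of a continuous logarithm of $f$ on the simply connected domain $\overline{\Omega}$ via the covering-space/path-lifting argument for $\exp$, which is the genuinely nontrivial input. Once this is in place, the homotopy/cut-and-cancel bookkeeping is routine.
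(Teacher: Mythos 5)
The paper does not actually prove this theorem: it is quoted as a known form of the argument principle for continuous functions, with pointers to \cite[Corollary~2.6]{Balk1991}, \cite[Theorem~2.2]{SuffridgeThompson2000}, and \cite[Section~2]{SeteLuceLiesen2015a}. So your argument fills in something the paper omits rather than diverging from a proof it gives. Judged on its own, it is the classical cut-and-cancel proof and is essentially correct; in particular you correctly isolate the genuinely non-holomorphic input, namely that the change of argument of a continuous nonvanishing map along a contractible closed curve vanishes, which comes from lifting through the covering $\exp\colon\C\to\C\setminus\{0\}$ rather than from Cauchy's theorem.

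One technical point should be tightened. The closure $\overline{\Omega}$ of the cut domain, taken in $\C$, is \emph{not} simply connected: the slits $\alpha_j$ lie in the closure of $\Omega$ from both sides, so $\overline{\Omega}$ is just the multiply connected region $\overline{\intx(\Gamma)}\setminus\bigcup_j\intx(\gamma_j)$, and ``a continuous branch of $\arg f$ on the simply connected domain $\overline{\Omega}$'' is not quite a correct statement. Two standard repairs: (a) observe that the composite boundary curve $\Gamma\cdot\alpha_1\cdot\gamma_1^{-1}\cdot\alpha_1^{-1}\cdots$ is null-homotopic in the set $S\defby\overline{\intx(\Gamma)}\setminus\{z_1,\dots,z_k\}$ on which $f$ is continuous and nonzero (it is the image of the boundary circle under a continuous map of the closed disk into $S$ that parametrizes the cut region), and the winding of $f$ along a null-homotopic closed curve in $S$ vanishes by lifting the homotopy through $\exp$; or (b) dispense with the cuts and note that $\Gamma$ is freely homotopic within $S$ to the loop that traverses the arcs and each $\gamma_j$ once positively, together with the homotopy invariance of $V(f;\cdot)$. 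Either fix is routine, so I would call this a presentational slip rather than a gap; note also that since continuity and nonvanishing are assumed only on $\overline{\intx(\Gamma)}$ minus the exceptional points, and not on an open neighborhood, the lifting must indeed be performed along curves and homotopies inside $S$ itself, which is exactly what these formulations do.
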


For the functions of our interest, which are continuous in $\C$ except for finitely many exceptional
points, we have the following Poincar\'e indices; see~\cite[Proposition~2.7]{SeteLuceLiesen2015a}.

\begin{prop}\label{prop:poincare_index}
Let $f(z) = r(z) - \conj{z}$ with $\deg(r)\geq 2$ be given. The Poincar\'e index of $f$ at a sense-preserving
zero is $+1$, and at a sense-reversing it is $-1$. If $z_0$ is a pole of $r$ of order $m$, then $f$ is
sense-preserving in a neighborhood of $z_0$, and the Poincar\'e index of $f$ at $z_0$ is $-m$.
\end{prop}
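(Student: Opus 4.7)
The plan is to handle the three cases of the proposition separately, in each case using homotopy invariance of the Poincar\'e index to reduce the computation to that of a simpler model function.

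For a regular zero $z_0$, the first step is to replace $f$ near $z_0$ by its first-order Taylor expansion $L(z) = r'(z_0)(z - z_0) - \conj{(z - z_0)}$. Since $J_f(z_0) = |r'(z_0)|^2 - 1 \ne 0$, the $\R$-linear map $L$ is invertible, so the error $f - L = o(|z - z_0|)$ is dominated by $L$ on a sufficiently small circle and the straight-line homotopy between them stays nonzero there; hence $\ind(f; z_0) = \ind(L; z_0)$. A direct computation with $z - z_0 = \eps e^{i\theta}$, factoring out $e^{-i\theta}$ and analysing the remaining loop $\theta \mapsto r'(z_0) e^{2i\theta} - 1$ (a doubly-traversed circle of radius $|r'(z_0)|$ centred at $-1$), then yields the winding of $L$ as $+1$ when $|r'(z_0)| > 1$ and $-1$ when $|r'(z_0)| < 1$, matching the desired signs on $\Omega_+$ and $\Omega_-$ respectively.

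For a pole $z_0$ of $r$ of order $m$, the Laurent expansion gives $|r'(z)| \to \infty$ as $z \to z_0$, so $J_f = |r'|^2 - 1 > 0$ in a punctured neighbourhood of $z_0$; this is the asserted sense-preservation. For the index one then exploits that on a sufficiently small circle about $z_0$ the term $r(z)$ dominates $\conj z$ in modulus, so the straight-line homotopy $r(z) - (1 - t)\conj z$ is nonzero on the circle, giving $\ind(f; z_0) = \ind(r; z_0)$; and $\ind(r; z_0) = -m$ by the standard fact that $r(z) \sim c(z - z_0)^{-m}$ winds $-m$ times on a positively oriented small loop around $z_0$.

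The only slightly subtle point is the sign bookkeeping in the linearization step: one has to verify that the winding of $L$ picks up $\sign(J_f(z_0))$ rather than its opposite, which is why I would carry out the factorisation through $e^{-i\theta}$ explicitly rather than arguing by an abstract orientation count. Beyond that, the argument is a routine combination of homotopy invariance with standard facts about the winding of meromorphic functions at their poles.
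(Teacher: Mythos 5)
Your proof is correct and complete: the linearization $L(z)=r'(z_0)(z-z_0)-\conj{(z-z_0)}$ is invertible precisely because the zero is regular, the factorization $L=\eps e^{-i\theta}\bigl(r'(z_0)e^{2i\theta}-1\bigr)$ gives winding $-1+2=+1$ when $|r'(z_0)|>1$ and $-1+0=-1$ when $|r'(z_0)|<1$, and near a pole the dominance of $r$ over $\conj{z}$ reduces the index to that of $c(z-z_0)^{-m}$, which is $-m$. The paper offers no proof of its own here but cites \cite[Proposition~2.7]{SeteLuceLiesen2015a}, and your argument is essentially the standard one given there, so nothing further is needed.
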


The determination of the Poincar\'e index of a singular zero is more challenging. For the functions of our
interest it may be $-1$, $0$, or $+1$ (see Corollary~\ref{cor:singular} and its discussion), while for 
a general harmonic function
it may even be undefined; see~\cite[p.~413]{DurenHengartnerLaugesen1996}.

The next result, which is an immediate consequence of Theorem~\ref{th:winding} and Proposition~\ref{prop:poincare_index},
shows how we can use the argument principle in order to determine the number of zeros.

\begin{cor}\label{cor:zero_counting}
Let $f(z) = r(z) - \conj{z}$ with $\deg(r) \ge 2$ be given. If $f$ is nonzero on a closed
Jordan curve $\Gamma$ and has no singular zero in $\intx(\Gamma)$, then
\begin{align*}
V(f;\Gamma) = N_+(f;\intx(\Gamma)) - N_-(f;\intx(\Gamma)) - P(f;\intx(\Gamma)),
\end{align*}
where $N_{(+,-)}(f;\intx(\Gamma))$ denotes the number of sense-preserving and sense-reversing zeros,
and $P(f;\intx(\Gamma))$ denotes the number of poles (with multiplicities) of $f$ in $\intx(\Gamma)$.
\end{cor}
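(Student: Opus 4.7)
The plan is to verify that the hypotheses of Theorem~\ref{th:winding} apply to $f$ on $\Gamma$, identify all exceptional points of $f$ inside $\Gamma$, and then sum the Poincar\'e indices supplied by Proposition~\ref{prop:poincare_index}. Since the corollary is announced as an immediate consequence of those two results, the work is essentially a bookkeeping exercise.

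First I would observe that $f(z)=r(z)-\conj{z}$ is continuous on all of $\C$ except at the finitely many poles of $r$. By hypothesis $f$ is nonzero and continuous on $\Gamma$, so $\Gamma$ itself contains no exceptional point. Inside $\Gamma$, the exceptional points of $f$ are precisely its zeros together with the poles of $r$. The assumption rules out singular zeros in $\intx(\Gamma)$, so every zero in $\intx(\Gamma)$ is either sense-preserving or sense-reversing; such regular zeros are automatically isolated, because in a neighborhood of a regular zero $z_0$ the Jacobian $J_f(z_0)\neq 0$ forces $f$ to be locally injective. The poles of $r$ are likewise isolated and finitely many since $r$ is rational. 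Hence the set of exceptional points of $f$ in $\intx(\Gamma)$ is finite, and Theorem~\ref{th:winding} applies.

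Second, I would substitute the indices given by Proposition~\ref{prop:poincare_index}: each sense-preserving zero contributes $+1$, each sense-reversing zero contributes $-1$, and each pole of $r$ of order $m$ contributes $-m$. Summing over all exceptional points in $\intx(\Gamma)$ yields
$$V(f;\Gamma) = \sum_{j=1}^{k}\ind(f;z_j) = N_+(f;\intx(\Gamma)) - N_-(f;\intx(\Gamma)) - P(f;\intx(\Gamma)),$$
where the last term is exactly $-P(f;\intx(\Gamma))$ because, by definition, $P$ counts poles with their multiplicities, which matches the $-m$ contribution of an order-$m$ pole.

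There is no real obstacle in this argument; the only subtleties are the two minor verifications just mentioned, namely that the exceptional points in $\intx(\Gamma)$ are isolated (so that Theorem~\ref{th:winding} is genuinely applicable) and that the multiplicity convention in the definition of $P$ is consistent with the index formula at a pole. Both are immediate once the hypothesis excluding singular zeros is used.
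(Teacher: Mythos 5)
Your proposal is correct and follows exactly the route the paper intends: the paper presents this corollary as an immediate consequence of Theorem~\ref{th:winding} and Proposition~\ref{prop:poincare_index}, and your write-up supplies precisely the bookkeeping (isolation and finiteness of the exceptional points, then summing the indices $+1$, $-1$, $-m$) that makes that deduction rigorous.
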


Finally, we state a version of Rouch{\'e}'s theorem which we will frequently use
in order to decide whether two functions have the same winding on a given Jordan curve.
A short proof of this result is given~\cite[Theorem 2.3]{SeteLuceLiesen2015a}.

\begin{thm}\label{thm:Rouche}
Let $\Gamma$ be a closed Jordan curve and suppose that $f,g : \Gamma \rightarrow \C$ are continuous.
If $|f(z) - g(z)| < |f(z)|+|g(z)|$ holds for all $z \in \Gamma$, then $V(f;\Gamma) = V(g;\Gamma)$.
\end{thm}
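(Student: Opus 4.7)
The plan is to establish the result via a standard linear-homotopy argument. First, I would exploit the hypothesis to rule out certain degeneracies on $\Gamma$. Neither $f$ nor $g$ can vanish on $\Gamma$: if $f(z_0)=0$, then $|f(z_0)-g(z_0)|=|g(z_0)|=|f(z_0)|+|g(z_0)|$, violating strict inequality, and the same argument handles $g$. Moreover, $f(z)$ is never a non-positive real multiple of $g(z)$: if $f(z_0)=-\lambda\, g(z_0)$ with $\lambda\geq 0$, then $|f(z_0)-g(z_0)|=(\lambda+1)|g(z_0)|=|f(z_0)|+|g(z_0)|$, again contradicting strict inequality. In particular, both $V(f;\Gamma)$ and $V(g;\Gamma)$ are well-defined integers.

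Next, I would introduce the linear homotopy
$$h_t(z) \defby (1-t)f(z) + t\, g(z),\quad t\in [0,1],$$
which continuously connects $h_0=f$ to $h_1=g$, and verify that $h_t(z)\neq 0$ for every $(t,z)\in [0,1]\times \Gamma$. The endpoint cases $t\in\{0,1\}$ are covered by the first observation above. For $t\in (0,1)$, a zero $h_t(z_0)=0$ would give $f(z_0) = -\tfrac{t}{1-t}\, g(z_0)$, exhibiting $f(z_0)$ as a non-positive real multiple of $g(z_0)$, which is excluded by the second observation.

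Finally, I would invoke the standard principle that a continuous family $(t,z)\mapsto h_t(z)$ of nowhere-zero functions on a closed Jordan curve yields a winding $V(h_t;\Gamma)$ that depends continuously on $t$; being integer-valued and continuous on the connected set $[0,1]$, it must be constant. Therefore $V(f;\Gamma) = V(h_0;\Gamma) = V(h_1;\Gamma) = V(g;\Gamma)$.

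The main technical point is the last step, namely the continuity of the winding as a function of the homotopy parameter. I would justify it by compactness: since $h_t(z)$ is continuous and non-vanishing on the compact set $[0,1]\times\Gamma$, it is bounded away from zero, so one can construct a continuous branch of $\arg h_t(z)$ jointly in $(t,z)$ (using finitely many sectors of $\C\setminus\{0\}$), whence $\Delta_\Gamma \arg h_t(z)$ varies continuously with $t$. Everything else in the proof is essentially algebraic manipulation of the given inequality.
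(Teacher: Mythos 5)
Your argument is correct. Note first that the paper itself does not prove this statement; it defers to the short proof in the cited reference (Theorem~2.3 of the S\`ete--Luce--Liesen paper), which runs through the quotient: one checks, exactly as you do, that $f$ and $g$ are nonvanishing on $\Gamma$ and that $h(z)\defby f(z)/g(z)$ never takes a value in $(-\infty,0]$, so that $h$ maps $\Gamma$ into the slit plane $\C\setminus(-\infty,0]$, where a continuous single-valued branch of the argument exists; hence $V(h;\Gamma)=0$, and the multiplicativity of the winding, $V(f;\Gamma)=V(gh;\Gamma)=V(g;\Gamma)+V(h;\Gamma)$, finishes the proof. You instead run the linear homotopy $h_t=(1-t)f+tg$ and invoke homotopy invariance of the winding. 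The two routes rest on the identical algebraic observation --- the hypothesis forbids $f(z)$ from being a non-positive real multiple of $g(z)$, which is exactly the condition that the segment from $f(z)$ to $g(z)$ avoids the origin, equivalently that $f/g$ avoids the slit --- but they package it differently. The quotient proof is shorter and needs only the product rule for windings plus the elementary existence of $\arg$ on the slit plane; your homotopy proof carries the (mildly heavier, but equally standard) burden of justifying that $t\mapsto V(h_t;\Gamma)$ is continuous, which you correctly reduce to a joint-continuity/compactness lifting argument on $[0,1]\times\Gamma$. Your version has the small advantage of never dividing by $g$, so it generalizes verbatim to settings where forming the quotient is awkward; the paper's cited version is the more economical write-up. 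Either is a complete and valid proof.
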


\section{Constant shifts that do not affect the number of zeros}\label{sect:main_section_1}

In this section we will begin our study of the effect of constant shifts on the zeros
of a given \emph{non-degenerate} rational harmonic function
\begin{equation}\label{eqn:def_f}
f(z)=r(z)-\conj{z}\quad\mbox{with}\quad \deg(r)\geq 2\quad\mbox{and}\quad
\lim_{|z|\rightarrow\infty}|f(z)|=\infty.
\end{equation}
As mentioned in~\cite[Remark~3.2]{SeteLuceLiesen2015a}, the assumption 
$\lim_{|z|\rightarrow\infty}|f(z)|=\infty$ is not restrictive. It only excludes functions
with $r(z)=\alpha z + p(z)/q(z)$, where $|\alpha|=1$ and $\deg(p)\leq \deg(q)$.

In addition to the notation established in Corollary~\ref{cor:zero_counting}, we denote by $N(f;A)$
the number of zeros of $f$ in the set $A\subseteq \C$, and write $N(f) \defby N(f;\C)$ for
brevity. Moreover, by $N_s(f)$ we denote the number of singular zeros of $f$.

Our first result characterizes the zeros of the shifted function $f_\eta(z) = f(z)-\eta$
for a \emph{sufficiently large} (real) shift $\eta>0$.

\begin{thm}\label{thm:initial_value}
Let $f$ be as in \eqref{eqn:def_f} with $r(z)= \frac{p(z)}{q(z)}$,
let $v_1, \dots, v_{m}$ be the poles of $f$ with their respective
multiplicities $\mu_1,\dots,\mu_m$, and let
$k \defby \max(\deg(p) - \deg(q),1)$. If $\eta>0$ is sufficiently large,
then $f_\eta$ has exactly $\deg(q) + k$ zeros $z_1, \dots z_{\deg(q) + k}$, and it exists an $\eps > 0$ such that
\begin{compactenum}[(i)]
\item{$N(f_{\eta};B_\eps(v_j))
=N_{+}(f_{\eta};B_\eps(v_j))= \mu_j$ \quad  for $j=1,\dots,m$,}
\item{$z_j\in A_\infty\setminus
\bigcup_{\ell=1}^m B_\eps(v_\ell)$ \quad for $j = \deg(q) + 1, \dots, \deg(q) + k$.}
\end{compactenum}
\end{thm}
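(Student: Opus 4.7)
The plan is to partition $\C$ into small disks $B_\eps(v_j)$ around each pole, a far-field exterior $\{|z| > R(\eta)\}$, and the annular complement, then count zeros in each piece using Rouch\'e's theorem (Theorem~\ref{thm:Rouche}) and Corollary~\ref{cor:zero_counting}. I fix $\eps > 0$ independently of $\eta$, small enough that for each pole $v_j$: (a) $B_\eps(v_j) \subseteq \Omega_+$ (possible since $r'$ has a pole at $v_j$, forcing $|r'(z)| > 1$ nearby), (b) $\CC \cap B_\eps(v_j) = \varnothing$, and (c) the disks are pairwise disjoint. Since the caustic $f(\CC)$ is bounded, for $\eta$ large $\eta \notin f(\CC)$, so by Proposition~\ref{prop:critcaus} the function $f_\eta$ has no singular zeros, justifying all applications of Corollary~\ref{cor:zero_counting} below.

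For part (i), I would apply Theorem~\ref{thm:Rouche} on $\partial B_\eps(v_j)$ to compare $f_\eta(z) = r(z) - \conj{z} - \eta$ with the meromorphic function $g(z) = r(z) - \eta$. The difference $|f_\eta - g| = |\conj{z}| \le |v_j| + \eps$ is a fixed constant, while $|g(z)| \ge \eta - \max_{\partial B_\eps(v_j)} |r| \to \infty$ with $\eta$, so Rouch\'e gives $V(f_\eta; \partial B_\eps(v_j)) = V(g; \partial B_\eps(v_j))$. For $\eta$ large, the $\mu_j$ preimages of $\eta$ under $r$ cluster near $v_j$ and so lie in $B_\eps(v_j)$; hence $g$ has $\mu_j$ zeros and $\mu_j$ poles in $B_\eps(v_j)$, giving $V(g; \partial B_\eps(v_j)) = 0$ by the meromorphic argument principle. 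Because $B_\eps(v_j) \subseteq \Omega_+$, Corollary~\ref{cor:zero_counting} reduces to $0 = N_+(f_\eta; B_\eps(v_j)) - 0 - \mu_j$, i.e., $N(f_\eta; B_\eps(v_j)) = N_+(f_\eta; B_\eps(v_j)) = \mu_j$.

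For the total count and part (ii), the assumption $\lim_{|z|\to\infty}|f(z)| = \infty$ lets me choose $R(\eta)$ so that $|f(z)| > \eta$ for $|z| \ge R(\eta)$, excluding zeros outside $B_R$. I then apply Corollary~\ref{cor:zero_counting} on $|z| = R$, with the winding computed by Rouch\'e: when $\deg(p) \le \deg(q)$, $r$ is bounded (shift the comparison to $\alpha - \conj{z} - \eta$ if $\deg(p) = \deg(q)$ with $\alpha = \lim r$), so $f_\eta$ has the same winding as $-\conj{z} - \eta$, giving $V = -1$; when $\deg(p) > \deg(q)$, $r(z) \sim cz^k$ dominates, so $f_\eta$ has the same winding as $r - \eta$, giving $V = \deg(p) - \deg(q) = k$. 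Subtracting the $\deg(q)$ sense-preserving zeros already localized near the poles, the annular complement inside $B_R$ satisfies $N_+ - N_- = -1$ or $k$. An elementary bound using $|r(z)| = O(|z|^k)$ (or bounded) for $z$ away from poles versus $|\conj{z}+\eta| \sim \eta$ rules out zeros of $f_\eta$ in the intermediate region, so the remaining zeros lie in $A_\infty$. Since $A_\infty \subseteq \Omega_-$ when $\deg(p) \le \deg(q)$ (as $|r'(z)| \to 0$ at infinity) and $A_\infty \subseteq \Omega_+$ when $\deg(p) > \deg(q)$, these zeros are of a single orientation, so their total number equals $|N_+ - N_-| = k$ in either case.

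The main obstacle is the passage from the net count $N_+ - N_-$ produced by Rouch\'e to the actual total number of zeros, since Rouch\'e could in principle fail to detect unrelated sense-preserving/sense-reversing pairs. The key device is the observation that $A_\infty$ is entirely contained in a single $\Omega_\pm$, which forces one of $N_\pm$ to vanish on the far-field zeros; combined with the compactness-type estimate ruling out zeros in the intermediate region, this pins down the total count as $\deg(q) + k$ and locates the extra $k$ zeros in $A_\infty \setminus \bigcup_\ell B_\eps(v_\ell)$.
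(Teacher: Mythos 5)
Your route is genuinely different from the paper's in its second half, and in one respect it is tighter. The paper never forms a global winding count: it localizes the $k$ extra zeros by hand, producing an explicit small disk (around $-\eta$, $-\eta+\conj{c}$, the zero $z_g$ of $cz-\eta-\conj z$, or the $k$-th roots of $\eta/c$, depending on the degree case) and applying Theorem~\ref{thm:Rouche} on each such disk; the ``exactly'' part of the count is then left somewhat implicit. Your combination of a global application of Corollary~\ref{cor:zero_counting} on $|z|=R(\eta)$, the elimination of the intermediate region by the bound $|f_\eta(z)|\ge \eta-\max_K|f|$ on a fixed compact set $K\supseteq \C\setminus A_\infty$, and the observation that $A_\infty$ lies entirely in one of $\Omega_\pm$ (so that $N_+-N_-$ there \emph{is} the total count) pins down the exact number of zeros cleanly. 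Your treatment of (i) via $g=r-\eta$ is equivalent to the paper's simpler choice $g\equiv-\eta$ (both give winding $0$ on $\partial B_\eps(v_j)$).

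There is, however, a concrete error in the case $\deg(p)=\deg(q)+1$, i.e.\ $k=1$ with $r(z)=cz+O(1)$. Here $cz$ does \emph{not} dominate $\conj z$ on $|z|=R$ -- both are of order $R$ -- so you cannot conclude $V(f_\eta;|z|=R)=V(r-\eta;|z|=R)$. The correct comparison function is the real-linear map $g(z)=cz-\conj z-\eta$, whose Jacobian determinant is $|c|^2-1$ (nonzero by the standing assumption $\lim_{|z|\to\infty}|f(z)|=\infty$, which is exactly why the paper's case (c) invokes it); its winding on a large circle is $\operatorname{sign}(|c|^2-1)$, which is $-1$ when $|c|<1$, not $+1=k$ as you assert. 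Your companion claim that $A_\infty\subseteq\Omega_+$ whenever $\deg(p)>\deg(q)$ fails in the same subcase, since $|r'(z)|\to|c|<1$ forces $A_\infty\subseteq\Omega_-$. The two errors cancel -- with $V=-1$ and one sense-reversing far-field zero you still land on $\deg(q)+1$ total zeros -- but as written the argument is wrong for $|c|<1$, and a reader following your recipe literally would compute the wrong winding. You need to split $k=1$, $\deg(p)=\deg(q)+1$ into the subcases $|c|<1$ and $|c|>1$ (or argue uniformly with $\operatorname{sign}(|c|^2-1)$), noting that in both the far-field zeros have a single orientation matching the sign of the winding, so the total is $\deg(q)+1$ either way. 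With that repair the proof is complete.
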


\begin{proof}
In order to explain the general idea of the proof, assume that we are given
some $\eta \gg 1$. Then $f_\eta(z_j)=0$ means that
$r(z_j) - \conj{z}_j\gg 1$, which can happen when the zero $z_j$ of $f_\eta$ is close
to a pole of $f$, or when $|z_j| \gg 1$. These cases correspond to (i)
and (ii), and we will now first prove the existence of the zeros in (i),
and then of the additional zeros in (ii).

\emph{Case 1 (zeros close to a pole):} In the neighborhood of any pole $v_j$
of $f$ we have $|r'(z)|\gg 1$, and hence $f$ is sense-preserving. Therefore we
can find an $\eps > 0$ such that
\begin{compactenum}[(a)]
\item{$f$ is sense-preserving on $B_\eps(v_j)$ for all $j=1,\dots,m$,}
\item{$B_\eps(z_{j}) \cap B_\eps(z_{\ell}) = \emptyset$ for all
$j,\ell \in \{1,\dots,m\}$ with $j \neq \ell$.}
\end{compactenum}
Now consider any $\eta \geq$ $2 \max\{|f(z)| : z \in \bigcup_{j=1}^{m} \partial B_\eps(v_j) \} > 0$,
and let $g(z)\equiv -\eta$. Then
\begin{align*}
|f_{\eta}(z) - g(z)| = |f(z)| < |\eta|\le |f_{\eta}(z)| + |g(z)|  \quad \text{ for all } z \in \bigcup_{j = 1}^m \partial B_\eps(v_j).
\end{align*}
Since $V(g;\partial B_\eps(v_j))=0$, the function $f$ has a pole of order
$\mu_j$ in $B_\eps(v_j)$, and $f$ is sense-preserving in $B_\eps(v_j)$,
Theorem~\ref{thm:Rouche} and Proposition~\ref{prop:poincare_index} yield
$$N(f_{\eta};B_\eps(v_j))=N_{+}(f_{\eta};B_\eps(v_j)) = \mu_j
\quad \mbox{for all $j=1,\dots,m$,}$$
which proves the existence of the zeros $z_1,\dots, z_{\deg(q)}$ as stated in (i).

\emph{Case 2 (zeros away from the poles):} We need to distinguish four cases according to the
degrees of $p$ and $q$.

\emph{(a)} $\deg(p) < \deg(q)$, hence $k=1$: In this case
$\lim_{|z|\rightarrow \infty} |r(z)| = 0$. Therefore, if $\eta>0$
is chosen large enough, there exists a $\delta > 0$, such that
$B_\delta(-\eta) \subset A_\infty$, and we have $|r(z)| < \delta$
for all $ z \in \partial B_\delta(-\eta)$, as well as
$B_\delta(-\eta)\cap B_\eps(v_j)=\emptyset$ for
all $j=1,\dots,m$. Using the function $g(z) \defby -\conj{z} - \eta$,
which has $-\eta$ as its only zero, we obtain
\begin{align*}
|f_{\eta}(z) - g(z)| = |r(z)| < \delta = |g(z)| \le |f_{\eta}(z)| + |g(z)|
\text{ for all } z \in \partial B_\delta(-\eta).
\end{align*}
Using Theorem~\ref{thm:Rouche} and Proposition~\ref{prop:poincare_index} we get
$$V(f_{\eta};\partial B_\delta(-\eta)) = V(g;\partial B_\delta(-\eta))=-1,$$
since in our region of interest $|r'(z)|<1$, and therefore
$N(f_{\eta};B_\delta(-\eta)) = N(g; B_\delta(-\eta)) = 1$. This shows the
existence of one additional zero of $f_\eta$, which is contained in the 
set $A_\infty$.

\emph{(b)} $\deg(p) = \deg(q)$, hence $k=1$: In this case we have $p=cq+\widetilde{q}$
for some (nonzero) $c\in\C$ and some polynomial $\widetilde{q}$ with $\deg(\widetilde{q})<\deg(q)$.
Hence $r(z)=c+\widetilde{q}(z)/q(z)$, where
$\lim_{z\rightarrow\infty}|\widetilde{q}(z)/q(z)|=0$.
We can now apply the same argument as in the previous case with
the function $g(z) \defby -\conj{z}-\eta+c$, using the disk $B_\delta(-\eta+\conj{c})$ for sufficiently small $\delta > 0$.

In the next two cases we will use that whenever $\deg(p)-\deg(q)>0$,
we can write
\begin{align}\label{eq:partial_fraction_decomposition}
r(z) = cz^k + \rho(z) + \widetilde{r}(z), \quad k=\deg(p)-\deg(q),
\end{align}
for some (nonzero) $c\in\C$, some polynomial $\rho$ of degree at most $k-1$, and
some rational function $\widetilde{r}$ with
$\lim_{|z|\rightarrow \infty}|\widetilde{r}(z)| = 0$.

\emph{(c)} $\deg(p) = \deg(q) + 1$, hence $k=1$: 
Our general assumption $\lim_{|z|\rightarrow\infty}|f(z)|=\infty$ implies that
in this case we have \eqref{eq:partial_fraction_decomposition} with
$|c|\neq 1$. We will first show that for each $\eta$
the function $g(z) \defby cz - \eta - \overline{z}$ has exactly one zero. 
Writing $c = \re(c) + i\im(c) $ and $z = x + iy$, the equation $g(z)=0$
can be written as
\begin{align*}
\begin{bmatrix}
\re(c)-1 & -\im(c) \\
\im(c) &  \re(c)+1
\end{bmatrix}
\begin{bmatrix}
x \\
y
\end{bmatrix}
=
\begin{bmatrix}
\eta\\
0
\end{bmatrix}.
\end{align*}
The determinant of the matrix is $\re(c)^2 -1 + \im(c)^2  = |c|^2-1\neq 0$.
Denoting the unique zero of $g$ by $z_g$ and using that
$\lim_{|z|\rightarrow \infty}|\widetilde{r}(z)| = 0$, we can choose $\eta>0$
sufficiently large so that $|\widetilde{r}(z)| < |cz-\eta -\overline{z}|$
holds for some $\delta>0$ and all $ z \in \partial B_\delta(z_g)$. As above
we can assume that $B_\delta(z_g)\subset A_\infty$ and that
$B_\delta(z_g)\cap B_\eps(v_j)=\emptyset$ for
all $j=1,\dots,m$. We then get
\begin{align*}
|f_{\eta}(z) - g(z)| = |\widetilde{r}(z)| <
|cz-\eta - \conj{z}| = |g(z)| \le |f_{\eta}(z)| + |g(z)|
\end{align*}
for all $z \in \partial B_\delta(z_g)$, so that
$N(f_{\eta};B_\delta(z_g)) = N(g; B_\delta(z_g)) = 1$ follows
from Theorem~\ref{thm:Rouche} and Proposition~\ref{prop:poincare_index}.

\emph{(d)} $\deg(p) > \deg(q)$ + 1, hence $k>1$: For a given $\eta>0$,
let $\eta_1,\dots,\eta_k$ be the $k\geq 2$ zeros
of $g(z) \defby cz^k - \eta$. Using \eqref{eq:partial_fraction_decomposition}
we can choose $\eta>0$ sufficienty large and $\delta > 0$ so that
$B_\delta(\eta_j)\subset A_\infty$ for all all $j=1,\dots,k$, and
$B_\delta(\eta_j)\cap B_\eps(v_\ell)=\emptyset$ for all $j\neq \ell$,
as well as $|\rho(z) + \widetilde{r}(z) - \conj{z}| < |g(z)|$
for all $z \in \partial B_\delta(\eta_j)$, $j = 1, \dots, k$. Therefore
\begin{align*}
|f_{\eta}(z) - g(z)| = |\rho(z) + \widetilde{r}(z) - \conj{z}| <  |g(z)| \le |f_{\eta}(z)| + |g(z)|
\end{align*}
for all $z \in \partial B_\delta(\eta_j)$, and the application of
Theorem~\ref{thm:Rouche} and Proposition~\ref{prop:poincare_index} finishes the proof.
\eop
\end{proof}

At the end of this section we will show that the assertions of Theorem~\ref{thm:initial_value}
hold for every $\eta\in\C$ with $|\eta|$ large enough.

We next prove that a \emph{sufficiently small} shift $\eta\in\C$ changes neither
the number nor the orientation of the regular zeros of $f$. This result is a
slight extension of~\cite[Lemma~2.5]{LuceSeteLiesen2014b}.

\begin{thm}\label{thm:small_perturbation}
Let $f$ be as in \eqref{eqn:def_f}, and let $z_1,\dots,z_m$ be the regular,
and $z_{m+1},\dots,z_M$ be the singular zeros of $f$. Let further
$\eps > 0$ be such that 
$\left(\bigcup_{j = 1}^m B_{\eps}(z_j)\right)\cap\CC=\emptyset$, 
and $B_{\eps}(z_j) \cap B_{\eps}(z_k) = \emptyset$
for all $j,k \in \{1,\dots,M\}$ with $j \neq k$. If $\eta\in\C$ satisfies
\begin{align*}
|\eta|<\delta \defby \min \Big\{|f(z)| : z \in \C \setminus \bigcup_{j = 1}^M B_{\eps}(z_j)\Big\},
\end{align*}
then the following properties hold:
\begin{compactenum}[(i)]
\item For each $j=1,\dots,m$ the functions $f$ and $f_\eta$ have the same orientation on
$B_{\eps}(z_j)$, and $N(f;B_{\eps}(z_j))=N(f_\eta;B_{\eps}(z_j))=1$.
\item $N(f;\C \setminus \bigcup_{j = m+1}^M B_{\eps}(z_j)) =
N(f_{\eta}; \C \setminus \bigcup_{j = m+1}^M B_{\eps}(z_j))=m$.
\end{compactenum}
\end{thm}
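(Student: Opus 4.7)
The plan is to run a Rouché argument (Theorem~\ref{thm:Rouche}) disk-by-disk around the regular zeros, exploiting the crucial observation that $f_\eta = f - \eta$ has exactly the same Wirtinger derivatives (and therefore the same Jacobian, the same critical set $\CC$, and the same regions $\Omega_\pm$) as $f$. In particular, since each $B_\eps(z_j)$ for $j\le m$ is chosen to avoid $\CC$, both $f$ and $f_\eta$ are either everywhere sense-preserving or everywhere sense-reversing on that disk, which immediately establishes the orientation half of~(i). It also implies that $f_\eta$ has no singular zeros in $B_\eps(z_j)$, a fact needed when we later apply Corollary~\ref{cor:zero_counting}.

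For the counting part of~(i), I would fix $j\in\{1,\dots,m\}$ and apply Theorem~\ref{thm:Rouche} on $\Gamma = \partial B_\eps(z_j)$ to the pair $f,f_\eta$. By the definition of $\delta$ we have $|f(z)|\ge \delta>|\eta|$ on $\Gamma$, hence
\[
|f(z)-f_\eta(z)| = |\eta| < \delta \le |f(z)| \le |f(z)|+|f_\eta(z)|,
\]
so $V(f;\Gamma) = V(f_\eta;\Gamma)$. Since $z_j$ is the only exceptional point of $f$ in $\overline{B_\eps(z_j)}$ (no poles, and regular zeros are isolated), Proposition~\ref{prop:poincare_index} gives $V(f;\Gamma) = \ind(f;z_j) = \pm 1$, with the sign matching the orientation of $f$ on the disk. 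Applying Corollary~\ref{cor:zero_counting} to $f_\eta$ on $B_\eps(z_j)$ (no poles, no singular zeros, and all zeros forced to have the same orientation by the argument above), the equality $V(f_\eta;\Gamma) = \pm 1$ forces exactly one regular zero of $f_\eta$ in $B_\eps(z_j)$, of the same orientation as $z_j$.

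For part~(ii), the count for $f$ is trivial: $\C\setminus\bigcup_{j=m+1}^M B_\eps(z_j)$ contains precisely the $m$ regular zeros $z_1,\dots,z_m$. For $f_\eta$ I would combine two facts. On the complement $\C\setminus\bigcup_{j=1}^M B_\eps(z_j)$ the definition of $\delta$ gives $|f_\eta(z)|\ge |f(z)|-|\eta| \ge \delta - |\eta| > 0$, so $f_\eta$ has no zeros there. Together with~(i), which places exactly one zero of $f_\eta$ in each $B_\eps(z_j)$ for $j\le m$, this yields $N(f_\eta;\C\setminus\bigcup_{j=m+1}^M B_\eps(z_j)) = m$.

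The only mild subtlety I anticipate is the justification that the winding around a regular zero of $f$ is $\pm 1$ (as opposed to a larger integer), but this is handed to us by Proposition~\ref{prop:poincare_index}, which pins the Poincar\'e index of any sense-preserving (resp.\ sense-reversing) zero at $+1$ (resp.\ $-1$). No behavior near the singular zeros $z_{m+1},\dots,z_M$ needs to be analyzed for this theorem: they are simply excised from the domain of interest, and the positivity of $\delta$ is enough to keep $f_\eta$ nonzero on the boundaries of their neighborhoods.
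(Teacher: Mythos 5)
Your proof is correct. Part (i) is essentially the paper's argument: the same Rouch\'e inequality $|f(z)-f_\eta(z)|=|\eta|<\delta\le|f(z)|$ on $\partial B_\eps(z_j)$, followed by the winding/index bookkeeping of Proposition~\ref{prop:poincare_index} and Corollary~\ref{cor:zero_counting}, with the orientation statement coming for free from $\partial_z f_\eta=\partial_z f$ and $\partial_{\conj z}f_\eta=\partial_{\conj z}f$.

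For part (ii) you take a slightly different and in fact cleaner route. The paper argues by contradiction: it supposes an extra zero $z_0$ exists, surrounds it by a small disk on which $f$ is nonzero, and runs Rouch\'e once more to derive a contradiction. You instead observe directly that on $\C\setminus\bigcup_{j=1}^M B_\eps(z_j)$ one has $|f_\eta(z)|\ge|f(z)|-|\eta|\ge\delta-|\eta|>0$, so no zero of $f_\eta$ can live there; combined with (i) this finishes the count. This avoids the (mildly delicate) step in the paper of choosing $\eps_0$ so that the Rouch\'e inequality still holds on $\partial B_{\eps_0}(z_0)$, and I consider it an improvement.

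One small point where you are less careful than the paper: in (i) you assert parenthetically that $z_j$ is the only exceptional point in $\overline{B_\eps(z_j)}$, i.e.\ that the disk contains no pole of $r$. The hypotheses only force $B_\eps(z_j)$ to avoid $\CC$ and the other disks; a disk around a sense-preserving zero lies in a component of $\Omega_+$ which could in principle contain a pole. This does not break your argument --- since $f$ and $f_\eta$ have identical poles, the term $P(f;\intx(\Gamma))$ in Corollary~\ref{cor:zero_counting} appears on both sides and cancels, which is exactly how the paper phrases it (``the poles of $f$ and $f_\eta$ coincide'') --- but you should either justify the absence of poles (e.g.\ by additionally shrinking $\eps$) or drop the claim and let the cancellation do the work.
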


\begin{proof}
\emph{(i)} From the construction it is clear that $f$ and $f_\eta$ have the
same orientation on each set $B_{\eps}(z_j)$. Moreover,
for all $z \in \partial B_{\eps}(z_j)$ we have
\begin{align}\label{eq:fmfe}
|f(z) - f_\eta(z)| =  |\eta| < \delta \le |f(z)| \le |f(z)| + |f_\eta(z)|,
\end{align}
and hence $V(f; \partial B_{\eps}(z_j) ) = V(f_\eta; \partial B_{\eps}(z_j) )$ by Theorem~\ref{thm:Rouche}.
Since $f$ has exactly one zero in $B_{\eps}(z_j)$, and the poles of $f$ and $f_\eta$ coincide, the assertion
follows from Corollary~\ref{cor:zero_counting}.

\emph{(ii)} We know from (i) that $f_\eta$ has exactly one zero in in each of the
sets $B_{\eps}(z_j)$, $j=1,\dots,m$. If $f_\eta$ has an additional zero
$z_0 \in \C\setminus \bigcup_{j = m+1}^M B_{\eps}(z_j)$, then we can choose an $\eps_0>0$
such that $f$ is nonzero on $\overline{B_{\eps_0}(z_0)}$. Then \eqref{eq:fmfe} holds for all
$z\in \partial B_{\eps_0}(z_0)$, and Theorem~\ref{thm:Rouche} yields
$V(f; \partial B_{\eps_0}(z_0) ) = V(f_\eta; \partial B_{\eps_0}(z_0) )$, which is a contradiction,
since $f$ and $f_\eta$ have the same number of poles, but $f$ has no zeros in $B_{\eps_0}(z_0)$.
\eop
\end{proof}

Note that our general assumption $\lim_{|z|\rightarrow\infty}|f(z)|=\infty$ implies
that $\delta>0$ in Theorem~\ref{thm:small_perturbation}.

Our next goal is to show that the number of zeros of the shifted functions $f_\eta$ remains constant
as long as the shift $\eta$ does not cross a caustic of $f$. Our proof is based on the following two lemmas.

\begin{lem}\label{lem:main}
If $\Gamma$ is a critical curve of $f$, and $\eta_1,\eta_2 \in \C$ are such that
$\lambda\eta_1 + (1-\lambda)\eta_2 \not \in f(\Gamma)$ holds for all $0 \le \lambda \le 1$,
then $V(f_{\eta_1};\Gamma) = V(f_{\eta_2};\Gamma)$.
\end{lem}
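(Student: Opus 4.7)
The plan is a homotopy argument via \Cref{thm:Rouche}. Parametrize the straight-line segment joining $\eta_2$ and $\eta_1$ by $\eta(\lambda) \defby \lambda\eta_1 + (1-\lambda)\eta_2$ for $\lambda \in [0,1]$, and define $W(\lambda) \defby V(f_{\eta(\lambda)};\Gamma)$. By hypothesis, $\eta(\lambda) \notin f(\Gamma)$ for every $\lambda \in [0,1]$, which is precisely the statement that $f_{\eta(\lambda)}$ is nowhere zero on $\Gamma$, so $W(\lambda)$ is well-defined as an integer. It suffices to show $W(0) = W(1)$, for which in turn it suffices to show that $W$ is locally constant on $[0,1]$.

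The main step is the local-constancy. Fix $\lambda_0 \in [0,1]$. Since $\Gamma$ is a smooth Jordan curve, it is compact; and since $f_{\eta(\lambda_0)}$ is continuous and nonvanishing on $\Gamma$, the quantity $\delta \defby \min_{z \in \Gamma} |f_{\eta(\lambda_0)}(z)|$ is strictly positive. For any $\lambda \in [0,1]$ and any $z \in \Gamma$,
\[
|f_{\eta(\lambda_0)}(z) - f_{\eta(\lambda)}(z)| = |\lambda_0 - \lambda|\,|\eta_1 - \eta_2|,
\]
so in the nontrivial case $\eta_1 \neq \eta_2$, any $\lambda$ with $|\lambda - \lambda_0| < \delta/|\eta_1 - \eta_2|$ satisfies
\[
|f_{\eta(\lambda_0)}(z) - f_{\eta(\lambda)}(z)| < \delta \leq |f_{\eta(\lambda_0)}(z)| \leq |f_{\eta(\lambda_0)}(z)| + |f_{\eta(\lambda)}(z)|
\]
for every $z \in \Gamma$. \Cref{thm:Rouche} then gives $W(\lambda) = W(\lambda_0)$.

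Hence $W \colon [0,1] \to \Z$ is locally constant, and by connectedness of $[0,1]$ it is in fact constant; in particular $W(0) = W(1)$, which is the claimed equality. There is no real obstacle beyond the conceptual observation that the non-crossing hypothesis on the segment translates exactly into the nonvanishing of the one-parameter family $\{f_{\eta(\lambda)}\}_{\lambda \in [0,1]}$ on $\Gamma$, and that compactness of the critical curve provides the uniform lower bound $\delta > 0$ needed to run the Rouché estimate.
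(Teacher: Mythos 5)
Your proof is correct, but it takes a different route from the paper. You run the standard homotopy-invariance argument: parametrize the segment, use compactness of $\Gamma$ to get a positive lower bound $\delta$ on $|f_{\eta(\lambda_0)}|$, apply \Cref{thm:Rouche} on short subintervals, and conclude by local constancy plus connectedness of $[0,1]$. The paper instead applies \Cref{thm:Rouche} exactly once, directly to the pair $f_{\eta_1}, f_{\eta_2}$ (after normalizing to $\eta_2=0$, $\eta_1>0$): the triangle inequality gives $|f_{\eta_1}(z)-f(z)| \le |f_{\eta_1}(z)|+|f(z)|$ for free, and the only thing to rule out is equality, which would force $f(z_0)=\mu\eta_1$ for some $0<\mu<1$ and hence put a point of the open segment on the caustic $f(\Gamma)$ --- contradicting the hypothesis. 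That argument exploits the symmetric form $|f-g|<|f|+|g|$ of Rouch\'e in an essential way and needs no subdivision or compactness bound; yours only ever uses the classical-strength estimate $|f-g|<|f|$ locally and compensates with connectedness. A side benefit of your version is that it proves the conclusion verbatim for an arbitrary continuous path in $\C\setminus f(\Gamma)$ joining $\eta_1$ to $\eta_2$, not just a segment, which is essentially what \Cref{thm:main} later extracts from \Cref{lem:main_2} via polygonal-chain approximation. Both arguments are complete and correct.
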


\begin{proof}
Using an appropriate rotation and translation of the complex plane we may assume without loss of generality
that $\eta_1 > 0$ and $\eta_2 = 0$. Our assumption then reads $\lambda\eta_1 \not \in f(\Gamma)$ for
all $0\leq \lambda\leq 1$, and Proposition~\ref{prop:critcaus} implies that
$f_{\lambda\eta_1}(z)=f(z)-\lambda\eta_1\neq 0$ holds
for all $0\leq \lambda\leq 1$ and $z\in\Gamma$.

By construction and the triangle inequality we have
\begin{align}\label{eq:tmp1}
\eta_1 = |f(z) - f_{\eta_1}(z)| \le |f(z)| + |f_{\eta_1}(z)| \quad \text{ for all } z \in \C.
\end{align}
If equality holds in \eqref{eq:tmp1} for some $z_0 \in \Gamma$, then $|f(z_0)|<\eta_1$,
since $f_{\eta_1}(z_0)\neq 0$. Moreover,
\begin{align*}
\eta_1 - |f(z_0)| = |f_{\eta_1}(z_0)| = |\eta_1 - f(z_0)|,
\end{align*}
which implies, together with $|f(z_0)| < \mu \eta_1$, that $f(z_0)=\mu \eta_1$ for some $0<\mu<1$. But this means that $f(z_0)-\mu \eta_1=0$
with $z_0\in\Gamma$, i.e., $\mu \eta_1$ is on the caustic $f(\Gamma)$, which is a contradiction. Consequently, we must have a strict
inequality in \eqref{eq:tmp1}, and hence $V(f_{\eta_1};\Gamma) = V(f;\Gamma)$ by Theorem~\ref{thm:Rouche}.
\eop
\end{proof}

\begin{lem}\label{lem:main_2}
If $\eta_1,\eta_2 \in \C$ are such that $\lambda\eta_1 + (1-\lambda)\eta_2 \not \in f(\CC)$
holds for all $0\leq \lambda \leq 1$, then $N(f_{\eta_1};A) = N(f_{\eta_2};A)$ holds for each set
$A\in \Aa$, and $N(f_{\eta_1}) = N(f_{\eta_2})$.
\end{lem}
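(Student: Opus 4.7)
The plan is to use Lemma~\ref{lem:main} on each critical curve bounding a given $A\in\Aa$, combine the resulting winding identities via Corollary~\ref{cor:zero_counting}, and exploit the fact that each $A\in\Aa$ lies entirely in $\Omega_+$ or entirely in $\Omega_-$ so that all regular zeros inside it carry the same Poincar\'e index.

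First I would note that, since the straight-line segment from $\eta_1$ to $\eta_2$ avoids $f(\CC)$, in particular neither endpoint lies on a caustic, so Proposition~\ref{prop:critcaus} implies that both $f_{\eta_1}$ and $f_{\eta_2}$ have no singular zeros; every zero lies in some $A\in\Aa$. Next, fix a bounded $A\in\Aa$. Its boundary $\partial A$ is a disjoint union of critical curves $\Gamma_0,\Gamma_1,\dots,\Gamma_s$, where $\Gamma_0$ is the outer boundary (traversed counterclockwise) and $\Gamma_1,\dots,\Gamma_s$ are the inner boundaries (traversed clockwise). Applying Lemma~\ref{lem:main} to each $\Gamma_i$ gives $V(f_{\eta_1};\Gamma_i)=V(f_{\eta_2};\Gamma_i)$, and summing with the appropriate orientations yields $V(f_{\eta_1};\partial A)=V(f_{\eta_2};\partial A)$. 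Because $A$ is contained in $\Omega_+$ or $\Omega_-$, Corollary~\ref{cor:zero_counting} reduces to
\begin{align*}
V(f_{\eta_i};\partial A) = \varepsilon_A\, N(f_{\eta_i};A) - P(f;A),
\end{align*}
with $\varepsilon_A=+1$ or $-1$ depending on the orientation of $A$. Since $P(f;A)$ does not depend on $\eta_i$, the equality $N(f_{\eta_1};A)=N(f_{\eta_2};A)$ follows.

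The remaining case is the unbounded set $A_\infty$. For this I would choose $R$ large enough so that all critical curves, all poles of $f$, and (using Theorem~\ref{thm:initial_value} together with Theorem~\ref{thm:Rouche} applied via $|\eta_i|<|f(z)|$ on $\{|z|=R\}$) all zeros of both $f_{\eta_1}$ and $f_{\eta_2}$ lie inside $B_R(0)$. Then apply Corollary~\ref{cor:zero_counting} to the bounded region $A_\infty\cap B_R(0)$, whose boundary consists of the circle $\{|z|=R\}$ and finitely many inner critical curves. The windings on the inner critical curves agree for $\eta_1$ and $\eta_2$ by Lemma~\ref{lem:main}, while the windings on $\{|z|=R\}$ both equal $V(f;\{|z|=R\})$ by Rouch\'e's theorem (Theorem~\ref{thm:Rouche}). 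This gives $N(f_{\eta_1};A_\infty)=N(f_{\eta_2};A_\infty)$. Summing $N(f_{\eta_i};A)$ over all $A\in\Aa$ and using that $f_{\eta_i}$ has no zeros on $\CC$ (again by Proposition~\ref{prop:critcaus}) yields $N(f_{\eta_1})=N(f_{\eta_2})$.

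The main obstacle is the bookkeeping for the unbounded region: one must confirm that no zero escapes to or enters from infinity along the segment from $\eta_1$ to $\eta_2$. This is handled by the uniform Rouch\'e estimate $|\eta_i|<|f(z)|$ on a sufficiently large circle, which is available because the assumption $\lim_{|z|\to\infty}|f(z)|=\infty$ is in force throughout this section; once that estimate is in place, all windings on the enclosing circle are automatically preserved, and the argument for each $A\in\Aa$ becomes uniform.
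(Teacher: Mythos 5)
Your proposal is correct and follows essentially the same route as the paper: establish regularity of $f_{\eta_1},f_{\eta_2}$ via Proposition~\ref{prop:critcaus}, apply Lemma~\ref{lem:main} to every critical curve bounding a given $A\in\Aa$ and combine via Corollary~\ref{cor:zero_counting} (with the sign fixed by whether $A\subseteq\Omega_+$ or $A\subseteq\Omega_-$), and close off $A_\infty$ with an artificial large circle. Your explicit Rouch\'e estimate $|\eta_i|<|f(z)|$ on $\{|z|=R\}$ just spells out the detail the paper leaves implicit when it introduces $\partial B_R(0)$.
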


\begin{proof}
By Proposition~\ref{prop:critcaus}, the functions $f_{\eta_1}$ and $f_{\eta_2}$ are regular. Moreover, these functions have the same
poles, which are equal to of poles of $f$. For a bounded set $A \in \Aa$ we have a unique critical curve $\Gamma^* \subset \partial A$ such that $A\subset\intx(\Gamma^*)$.
If $f$ is sense-preserving on $A$, then Corollary~\ref{cor:zero_counting} and Lemma~\ref{lem:main} imply
\begin{align*}
N(f_{\eta_1};A) &= V(f_{\eta_1};\Gamma^*) - \sum\limits_{\mathclap{\substack{\Gamma \subset \partial A \text{ crit. curve}, \\ \intx(\Gamma)\cap A = \emptyset}}}V(f_{\eta_1};\Gamma) - P(f;\intx(\Gamma)) \\
&= V(f_{\eta_2};\Gamma^*) - \sum\limits_{\mathclap{\substack{\Gamma \subset \partial A \text{ crit. curve}, \\ \intx(\Gamma)\cap A = \emptyset}}}V(f_{\eta_2};\Gamma) - P(f;\intx(\Gamma)) = N(f_{\eta_2};A).
\end{align*}
If $f$ is sense-reversing on $A$, then
\begin{align*}
N(f_{\eta_1};A) &= - V(f_{\eta_1};\Gamma^*) + \sum\limits_{\mathclap{\substack{\Gamma \subset \partial A \text{ crit. curve}, \\ \intx(\Gamma)\cap A = \emptyset}}}V(f_{\eta_1};\Gamma) \\
&= -V(f_{\eta_2};\Gamma^*) + \sum\limits_{\mathclap{\substack{\Gamma \subset \partial A \text{ crit. curve}, \\ \intx(\Gamma)\cap A = \emptyset}}}V(f_{\eta_2};\Gamma) = N(f_{\eta_2};A).
\end{align*}

Using an additional artificial curve $\partial B_R(0)$ for a sufficiently
large $R>0$, containing all zeros and poles of $f_{\eta_1}$ and $f_{\eta_2}$
in its interior,
we obtain the equality for the set $A_\infty$. Finally, $N(f_{\eta_1}) = N(f_{\eta_2})$
holds since $f_{\eta_1}$ and $f_{\eta_2}$ have no singular zeros; cf. Proposition~\ref{prop:critcaus}.
\eop
\end{proof}

Now suppose that $\eta_1, \eta_2 \in \C$ are linked by a continuous path, $\psi: [0,1] \rightarrow \C$, with $\psi(0) = \eta_1$, $\psi(1) = \eta_2$, and $\psi([0,1]) \cap f(\CC) = \emptyset$. Since $\C \setminus f(\CC)$ is open, we can approximate $\psi([0,1])$ arbitrarily closely by a polygonal chain in $\C\setminus f(\CC)$; see \Cref{fig:polygonal_chain} for an illustration. Applying Lemma~\ref{lem:main_2} successively on this chain gives the following result.

\begin{thm}\label{thm:main}
If $\eta_1,\eta_2 \in \C$ are linked by continuous path that does not cross a caustic of $f$,  then $N(f_{\eta_1};A) = N(f_{\eta_2};A)$ holds for each set
$A\in \Aa$, and $N(f_{\eta_1}) = N(f_{\eta_2})$.
\end{thm}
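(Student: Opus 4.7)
The plan is to reduce the statement to Lemma~\ref{lem:main_2} by showing that the continuous path $\psi:[0,1]\to\C$ with $\psi(0)=\eta_1$, $\psi(1)=\eta_2$, and $\psi([0,1])\cap f(\CC)=\emptyset$ can be replaced by a polygonal chain whose successive segments avoid $f(\CC)$. The paper already hints at this approach in the paragraph preceding the theorem.

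First, I would note that $f(\CC)$ is compact. Indeed, the non-degeneracy assumption guarantees that $\CC$ consists of finitely many smooth closed Jordan curves, and $f$ is continuous on $\C$ (away from its finitely many poles, which do not lie on $\CC$), so $f(\CC)$ is the continuous image of a compact set and hence closed. Consequently $\C\setminus f(\CC)$ is open. The image $K\defby \psi([0,1])$ is also compact, so there exists $\rho>0$ with
\begin{equation*}
\{z\in\C : \mathrm{dist}(z,K)<\rho\}\subseteq \C\setminus f(\CC).
\end{equation*}

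Next I would use uniform continuity of $\psi$ on $[0,1]$ to select a partition $0=t_0<t_1<\dots<t_N=1$ such that $|\psi(t)-\psi(t_j)|<\rho/2$ whenever $t\in[t_{j},t_{j+1}]$. Writing $\zeta_j\defby \psi(t_j)$, every point on the straight segment between $\zeta_j$ and $\zeta_{j+1}$ is within distance $\rho$ of $K$ (by the triangle inequality applied against $\psi(t_j)$), and therefore lies in $\C\setminus f(\CC)$. This produces a polygonal chain $\zeta_0=\eta_1,\zeta_1,\dots,\zeta_N=\eta_2$ each of whose consecutive segments fulfills the hypothesis of Lemma~\ref{lem:main_2}.

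Finally, I would apply Lemma~\ref{lem:main_2} to each pair $(\zeta_j,\zeta_{j+1})$, obtaining
\begin{equation*}
N(f_{\zeta_j};A)=N(f_{\zeta_{j+1}};A)\quad\text{for every }A\in\Aa,\qquad N(f_{\zeta_j})=N(f_{\zeta_{j+1}}),
\end{equation*}
and chain these $N$ equalities to conclude $N(f_{\eta_1};A)=N(f_{\eta_2};A)$ and $N(f_{\eta_1})=N(f_{\eta_2})$.

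The only subtle step is the polygonal approximation, and the key point is to establish a uniform tubular neighborhood of $K$ in $\C\setminus f(\CC)$ before invoking uniform continuity; once that is in place the argument is a straightforward compactness/finite-chain reduction to the linear case already handled in Lemma~\ref{lem:main_2}.
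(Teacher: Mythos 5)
Your proposal is correct and follows essentially the same route as the paper: the paper also reduces the statement to Lemma~\ref{lem:main_2} by approximating the path $\psi$ with a polygonal chain in the open set $\C\setminus f(\CC)$ and chaining the resulting equalities. Your write-up merely supplies the compactness and uniform-continuity details that the paper leaves implicit, and these details are accurate.
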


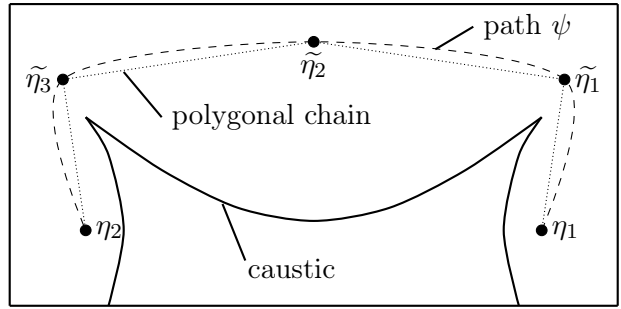
\begin{figure}[h]
\begin{center}
\begin{tikzpicture}
\draw [thick] plot [smooth] coordinates {(0,0) (8,0)};
\draw [thick] plot [smooth] coordinates {(0,4) (0,0)};
\draw [thick] plot [smooth] coordinates {(0,4) (8,4)};
\draw [thick] plot [smooth] coordinates {(8,4) (8,0)};
\draw [thick] plot [smooth] coordinates {(1,2.5) (1.3,2) (1.5,1)(1.3,0)};
\draw [thick] plot [smooth] coordinates {(1,2.5)(2,1.8) (3,1.3) (4,1.125) (5,1.3) (6,1.8) (7,2.5)};
\draw [thick] plot [smooth] coordinates {(7,2.5) (6.7,2)(6.5,1) (6.7,0)};
\draw [dashed] plot [smooth] coordinates {(7,1) (7.3,3) (4,3.5)(.7,3) (1,1)};
\draw [densely dotted] plot coordinates {(7,1) (7.3,3) (4,3.5)(.7,3) (1,1)};
\draw[fill=black](7,1)circle(2pt);
\draw[color=black] (7,1) node[right] {$\eta_1$};
\draw[fill=black](1,1)circle(2pt);
\draw[color=black] (1,1) node[right] {$\eta_2$};
\draw[fill=black](7.3,3)circle(2pt);
\draw[color=black] (7.3,3) node[right] {$\widetilde{\eta_1}$};
\draw[fill=black](4,3.5)circle(2pt);
\draw[color=black] (4,3.5) node[below] {$\widetilde{\eta_2}$};
\draw[fill=black](0.7,3)circle(2pt);
\draw[color=black] (0.7,3) node[left] {$\widetilde{\eta_3}$};
\draw[color=black] (3,.5) node[right] {caustic};
\draw [thick] plot [smooth] coordinates {(2.8,1.35) (3.1,.6)};
\draw[color=black] (6.1,3.7) node[right] {path $\psi$};
\draw [thick] plot [smooth] coordinates {(5.6,3.4) (6.1,3.7)};
\draw[color=black] (2,2.5) node[right] {polygonal chain};
\draw [thick] plot [smooth] coordinates {(2,2.5) (1.5,3.1)};
\end{tikzpicture}
\end{center}
\caption{Illustration of Theorem~\ref{thm:main}.}\label{fig:polygonal_chain}
\end{figure}

\Cref{fig:main_1} illustrates Theorem~\ref{thm:main}. In the left and middle plot we see the zeros, poles and critical curves of a function $f$ for two shifts ${\eta_1}$ and $\eta_2$. Since there is a continuous path from $\eta_1$ to $\eta_2$, which does not cross a caustic of $f$ (see the plot on the right), $f_{\eta_1}$ and $f_{\eta_2}$ have the same number of zeros, and these have the same locations with respect to the critical curves.

Moreover, Theorem~\ref{thm:main} implies that in Theorem~\ref{thm:initial_value} we can replace
$\eta>0$ by any sufficiently large $\widetilde{\eta}\in\C$.

\begin{figure}[h]
\begin{center}
\includegraphics[width=\textwidth]{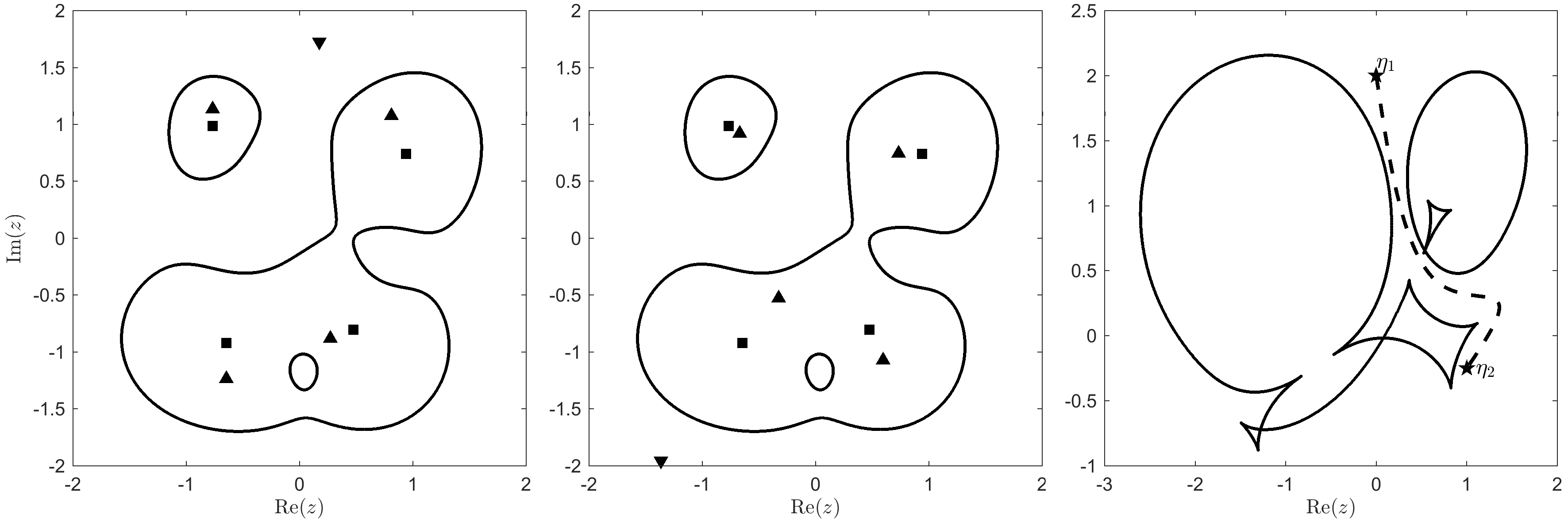}
\caption{Left and middle: Critical curves (solid), sense-preserving zeros
($\blacktriangle$), sense-reversing zeros ($\blacktriangledown$), and poles (\footnotesize$\blacksquare$\normalsize) for $f_{\eta_1}$ (left) and
$f_{\eta_2}$ (middle). Right: Caustics and the corresponding shift (dashed). }\label{fig:main_1}
\end{center}
\end{figure}

\begin{rem}\label{rmk:extremal}
A rational harmoinc function $f$ as in \eqref{eqn:def_f} is called \emph{extremal},
when it has the maximum number of $5\deg(r)-5$ zeros. As mentioned in the Introduction,
an explicit construction of Rhie~\cite{Rhie2003} yields an extremal function $f$ 
with $r$ as in \eqref{eq:r} and $\eta=0$ for each $n=\deg(r)\geq 2$. We then 
have $r=p/q$, where $\deg(p)=n-1$ and $\deg(q)=n$. 
Our Theorems~\ref{thm:initial_value} and~\ref{thm:main} imply that whenever 
$|\eta|$ is 
large enough, the shifted function $f_\eta$ has exactly $n+1$
zeros, namely $n$ zeros close to the poles of $f$, and one zero in $A_\infty$.
Thus, $f_\eta$ has $4n-6$ fewer zeros than the extremal function $f$. 
An example of an extremal rational harmonic function $f$ with $\deg(r)=3$ and hence $10$ zeros is shown in \Cref{fig:exp_mpw}.
In that example a sufficiently large $|\eta|$ leads to a function $f_\eta$ with only $4$ zeros. 
\end{rem}

\section{Crossing a caustic of \texorpdfstring{$f$}{f}}\label{sect:main_section_2}

In this section we will investigate the situation when a constant shift results
in a caustic crossing of a function~$f$ as in \eqref{eqn:def_f}. Let $z_0 \in \CC$
be a critical point of $f$, i.e., $|r'(z_0)| = 1$, and let us define $\eta \defby f(z_0)$,
so that $z_0$ is a singular zero of $f_\eta(z)= f(z) -\eta$. Using the
Taylor series of $r(z)$ at $z=z_0$ and $r(z_0)-\eta=\conj{z}_0$, we then have
\begin{align}\label{eqn:taylor}
f_\eta(z) &= r'(z_0) (z-z_0)+\frac{r''(z_0)}{2} (z-z_0)^2 - (\conj{z} - \conj{z}_0) +
\sum_{k=3}^\infty \frac{r^{(k)}(z_0)}{k!} (z-z_0)^k.
\end{align}
For simplicity of notation we will now assume that
$$z_0=0\quad \mbox{and}\quad r'(0)=1.$$
This assumption amounts to a shift and rotation of the complex plane and hence
it can be made without loss of generality of the results on the zeros of $f_\eta$
that we will derive in the following. Under our assumption we can write \eqref{eqn:taylor} as
\begin{align}
&f_\eta(z) = T(z)+R(z),\quad\mbox{where}\label{eqn:feta1}\\
&T(z) \defby  dz^2+z-\conj{z},\quad d \defby \frac{r''(0)}{2},\quad
R(z) \defby \sum_{k=3}^\infty \frac{r^{(k)}(0)}{k!} z^k.\label{eqn:feta2}
\end{align}
Because of the non-degeneracy assumption on $f$ we have $r''(0)\neq 0$,
and thus $d\neq 0$.

Our strategy in the following is to show that in the neighborhood of $z=0$
the remainder term $R$ is ``small enough'', so that the zeros of $f_\eta$
are close to the zeros of $T$, which can be explicitly analyzed. This
approach is similar in spirit to the perturbation analysis in~\cite{SeteLuceLiesen2015a}. 
Note that since $T$ is a harmonic polynomial
of degree $2$, it has at most $4$ zeros~\cite{KhavinsonSwiatek2003}.

\begin{lem}\label{lem:zerosT}
For a given $\delta\in \R$, let $T_{\delta d}(z) \defby T(z)-\delta d$.
Then all real zeros of $T_{\delta d}$ are given by
\begin{equation}\label{eqn:real_zeros}
z =
\begin{cases}
0, & \mbox{if $\delta=0$,} \\
\pm\sqrt{\delta}, & \mbox{if $\delta>0$,}
\end{cases}
\end{equation}
and all non-real zeros of $T_{\delta d}$ are given by
\begin{equation}\label{eqn:imag_zeros}
z =
\begin{cases}
-d^{-1}\pm i\sqrt{|d^{-2}|-\delta}, & \mbox{if $|d^{-2}|\geq \delta$ and $\re(d^{-1})^2\neq \delta$,} \\
-d^{-1}-i\im(d^{-1}), & \mbox{if $|d^{-2}|\geq \delta$ and $\re(d^{-1})^2= \delta$, } % &\\
\mbox{and $\im(d^{-1})\neq 0$.}
\end{cases}
\end{equation}
In particular, if $z$ is a non-real zero of $T_{\delta d}(z)$,
then $|z| \ge |\re(d^{-1})|>0$.
\end{lem}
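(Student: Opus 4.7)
My plan is to convert $T_{\delta d}(z) = 0$ into a real-variable system by writing $z = x+iy$. Since $z - \bar z = 2iy$, the equation becomes $dz^2 + 2iy - \delta d = 0$, i.e., $z^2 = \delta - 2iy\,d^{-1}$. Setting $d^{-1} = \alpha + i\beta$ (so $\alpha = \re(d^{-1})$, $\beta = \im(d^{-1})$) and expanding $z^2 = (x^2-y^2)+2ixy$, this splits into the two real equations
\begin{align*}
x^2 - y^2 &= \delta + 2\beta y,\\
2xy &= -2\alpha y.
\end{align*}
All the assertions of the lemma will be read off from this $2\times 2$ system.

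The real zeros correspond to $y=0$, in which case the first equation reduces to $x^2 = \delta$, giving the three cases of \eqref{eqn:real_zeros} directly (no solutions for $\delta < 0$, the lone root $z=0$ for $\delta=0$, and $z = \pm\sqrt\delta$ for $\delta>0$). For non-real zeros ($y \neq 0$), the second equation forces $x = -\alpha = -\re(d^{-1})$, and substitution into the first yields the real quadratic $y^2 + 2\beta y + (\delta - \alpha^2) = 0$, whose discriminant equals $4(\alpha^2+\beta^2-\delta) = 4(|d^{-2}| - \delta)$. Real solutions therefore exist precisely when $|d^{-2}| \geq \delta$, and are given by $y = -\beta \pm \sqrt{|d^{-2}|-\delta}$, producing the pair $z = -d^{-1} \pm i\sqrt{|d^{-2}|-\delta}$.

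The one subtle step, and the only place I expect to need care, is enforcing $y \neq 0$ and matching with the list in \eqref{eqn:imag_zeros}. Plugging $y = 0$ into the quadratic shows that $0$ is a root exactly when $\delta = \alpha^2 = \re(d^{-1})^2$; in that degenerate case one of the two candidates for $y$ coincides with the previously counted real zero and must be discarded, while the other is $y = -2\beta$, yielding the single non-real zero $z = -\alpha - 2i\beta = -d^{-1} - i\im(d^{-1})$ (which is indeed non-real precisely when $\im(d^{-1}) \neq 0$, matching the side condition). In every other case both values of $y$ are nonzero and one obtains the first line of \eqref{eqn:imag_zeros}.

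The final assertion is then immediate: every non-real zero has $\re(z) = -\re(d^{-1})$, so $|z|^2 = \re(z)^2 + \im(z)^2 \geq \re(d^{-1})^2$, giving $|z| \geq |\re(d^{-1})|$. The strict positivity $|\re(d^{-1})| > 0$ reflects that $z_0$ is a fold, not a cusp: indeed, taking real parts in $d \cdot d^{-1} = 1$ shows that $\re(d^{-1}) = 0$ is equivalent to $\re(d) = 0$, which is the cusp condition from \Cref{lem:cusp} applied at $z_0 = 0$ with $r'(0) = 1$.
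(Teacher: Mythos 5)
Your proof is correct and follows essentially the same route as the paper's: divide by $d$, split $z^2+d^{-1}(z-\conj{z})-\delta=0$ into real and imaginary parts with $d^{-1}=\alpha+i\beta$, solve the cases $y=0$ and $y\neq 0$ (the latter forcing $x=-\alpha$ and a real quadratic in $y$), and discard the root $y=0$ in the degenerate case $\delta=\alpha^2$. Your closing observation is a small bonus the paper leaves implicit: the strict inequality $|\re(d^{-1})|>0$ in the final claim indeed requires $\re(d)\neq 0$, i.e.\ that $f(0)$ is a fold rather than a cusp point, which is exactly the setting in which that bound is later used.
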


\begin{proof}
Let us write $z=x+iy$ and $d^{-1}=\alpha+i\beta$. The equation
$T_{\delta d}(z)=0$ holds if and only if
$$z^2+d^{-1}(z-\conj{z})-\delta = 0.$$
Splitting this equation into its real and imaginary parts gives the two equations
\begin{align}
0 &= y^2+2\beta y-x^2+\delta, \label{eqn:real1}\\
0 &= xy+\alpha y, \label{eqn:real2}
\end{align}
which we need to solve for real $x$ and $y$.

If $y=0$, then \eqref{eqn:real1} implies $x^2=\delta$, where both $x$ and $\delta$ are real.
Thus, all solutions of $T_{\delta d}(z)=0$ with $\im(z)=0$ are given by $z=0$ if $\delta=0$,
and $z=\pm\sqrt{\delta}$ if $\delta>0$. If $\delta<0$, then there exists no real solution.

If $y\neq 0$, then \eqref{eqn:real2} implies $x=-\alpha$, and substituting $x^2=\alpha^2$
in \eqref{eqn:real1} yields
$$y_\pm=-\beta \pm \sqrt{\beta^2+\alpha^2-\delta}.$$
A solution of $T_{\delta d}(z)=0$ with $\im(z)\neq 0$ exists only when
$y_{\pm}\in\R\setminus\{0\}$.

We have $y_{\pm}\in\R$ if and only if $\beta^2+\alpha^2-\delta=|d^{-2}|-\delta\geq 0$.
If this holds, and we additionally have $\alpha^2-\delta=\re(d^{-1})^2-\delta \neq 0$,
then $y_{\pm}\neq 0$, and $T_{\delta d}(z)=0$ has the two non-real solutions
\begin{align*}
z_\pm &= -\alpha+iy_{\pm}=-d^{-1}\pm i\sqrt{|d^{-2}|-\delta}.
\end{align*}
If $\alpha^2-\delta=0$ and $\beta\neq 0$, then $y_+=0$ and $y_{-}=-2\beta\neq 0$, so
that
$$z_{-}=-\alpha+i y_{-} = -d^{-1}-i \im(d^{-1})$$
is the only non-real solution. If $\alpha^2-\delta=0$ and $\beta=0$, then
there exists no non-real solution.
\eop
\end{proof}

\begin{rem}
Lemma~\ref{lem:zerosT} gives a complete characterization of all choices of $\delta\in\R$
that lead to an \emph{extremal} harmonic polynomial $T_{\delta d}$ that has the maximum
number of $4$ zeros. For such a polynomial we need $0<\delta\leq |d^{-2}|$ and
$\delta\neq \re(d^{-1})^2$, and then the $4$ zeros are
$z = \pm \sqrt{\delta}$  and $z = -d^{-1} \pm i\sqrt{ |d^{-1}| - \delta}$.
\end{rem}

Each non-real zero of $T_{\delta d}$ satisfies $|z|\geq |\re(d^{-1})|>0$,
independently of the size of $\delta$. Thus, if $\re(d)\neq 0$, then
for $\delta>0$ small enough,
the only zeros of $T_{\delta d}$ in a (small enough) neighborhood of $z=0$
are the two real zeros $\pm\sqrt{\delta}$. This fact will be very important
in the proof of the following result.

\begin{thm}\label{thm:main_fold}
Let $f$ be as in \eqref{eqn:def_f} with $r'(0)=1$, suppose that the
fold point $\eta := f(0)$ is simple, and let $A_+,A_- \in \mathcal{A}$
be the bordered sets on the critical point~$z=0$. Then there exists a
nonzero $\widetilde{\eta} \in \C$, such that for all $0< \alpha \le 1$
we have
\begin{compactenum}[(i)]
\item{$N(f_{\eta+\alpha\widetilde{\eta}})= N(f_{\eta}) + 1 = N(f_{\eta-\alpha\widetilde{\eta}}) + 2$,}
\item{$N(f_{\eta + \alpha\widetilde{\eta}}; A) =  N(f_\eta; A) = N(f_{\eta - \alpha\widetilde{\eta}}; A)$ for all $A \in \mathcal{A}\setminus\{A_+,A_-\}$,}
\item{$N(f_{\eta + \alpha\widetilde{\eta}}; A_+)  = N(f_{\eta - \alpha\widetilde{\eta}}; A_+)+ 1 $,}
\item{$N(f_{\eta + \alpha\widetilde{\eta}}; A_-) = N(f_{\eta - \alpha\widetilde{\eta}}; A_-)+1$,}
\item{ $N_{s}(f_{\eta + \alpha\widetilde{\eta}}) =
N_{s}(f_{\eta - \alpha\widetilde{\eta}}) =0$, and  $N_{s}(f_\eta) = 1$.}
\end{compactenum}
\end{thm}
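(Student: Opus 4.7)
\emph{Plan.} The analysis is local at $z = 0$ via the decomposition $f_\eta = T + R$ from \eqref{eqn:feta1}--\eqref{eqn:feta2}. Since $\eta = f(0)$ is a fold point and $\varphi = 0$ in our normalization, the cusp condition \eqref{eq:cusp_condition} fails, which reduces here to $\re(d) \neq 0$ for $d = r''(0)/2$. This single fact does the essential work throughout the proof: it keeps $|\re(d^{-1})| = |\re(d)|/|d|^2 > 0$ (used in \Cref{lem:zerosT}), it makes the critical-curve tangent $h = i\conj{d}/|d|$ transverse to the real axis (since $\im(h) = \re(d)/|d|$), and it makes the caustic tangent $ie^{i\varphi/2} = i$ at $\eta$ transverse to the direction $d$.

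\emph{Local step.} Set $\widetilde{\eta} \defby \tau d$ for $\tau > 0$ small. Then $f_{\eta \pm \alpha\widetilde{\eta}} = T_{\pm\alpha\tau d} + R$ in the notation of \Cref{lem:zerosT}, which (with $\delta = \pm\alpha\tau$) tells us that $T_{\alpha\tau d}$ has the real zeros $\pm\sqrt{\alpha\tau}$ while $T_{-\alpha\tau d}$ has no real zeros; non-real zeros of either have modulus at least $|\re(d^{-1})|$. Fix $\eps \in (0, |\re(d^{-1})|)$ small enough that $\overline{B_\eps(0)}$ contains no other exceptional point of $f$, and then $\tau$ small enough that $\sqrt{\tau} < \eps/2$. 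On $\partial B_\eps(0)$ the fold condition yields the uniform lower bound $|T_{\pm\alpha\tau d}(z)| \geq c|\re(d)|\eps^2$, while $|R(z)| \leq C\eps^3$, so \Cref{thm:Rouche} transfers the windings from $T_{\pm\alpha\tau d}$ to $f_{\eta\pm\widetilde{\eta}}$ on $\partial B_\eps(0)$. An analogous Rouché comparison on circles of radius $\sim \sqrt{\tau}$ around $\pm\sqrt{\tau}$ isolates exactly one zero of $f_{\eta + \widetilde{\eta}}$ near each, and the Jacobian values $J_T(\pm\sqrt{\tau}) = \pm 4\re(d)\sqrt{\tau} + O(\tau)$ have opposite signs, so one zero is sense-preserving and the other sense-reversing; transversality of the real axis to $h$ places one zero in $A_+$ and the other in $A_-$. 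A parallel argument shows $f_{\eta - \widetilde{\eta}}$ has no zeros in $B_\eps(0)$.

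\emph{Globalization and conclusion.} Applying \Cref{thm:small_perturbation} to $f_\eta$ with the small shift $\pm\widetilde{\eta}$ shows that outside $B_\eps(0)$ the functions $f_{\eta \pm \widetilde{\eta}}$ have exactly the same regular zeros (with the same orientations) as $f_\eta$; combined with the local step this yields (ii)--(iv) at $\alpha = 1$. Transversality of $\widetilde{\eta}$ to the caustic implies that for small $\tau$ each segment $\{\eta \pm s\widetilde{\eta} : 0 < s \leq 1\}$ avoids $f(\CC)$, so \Cref{thm:main} propagates the regional counts to every $\alpha \in (0,1]$. Summing yields (i), and (v) is immediate from \Cref{prop:critcaus} together with the simplicity assumption on the fold point. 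The principal technical hurdle is the Rouch\'e estimate on $\partial B_\eps(0)$: writing $d = |d|e^{i\phi_d}$, a direct computation gives $|T(\eps e^{i\theta})|^2 = |d|^2\eps^4 + 4\eps^2\sin^2\theta - 4\eps^3|d|\sin\theta\sin(2\theta+\phi_d)$, whose minimum over $\theta$ equals $|\re(d)|^2\eps^4$ and is attained near $\theta = 0, \pi$. At a cusp ($\re(d) = 0$) this minimum would vanish to leading order and Rouch\'e would break down, but the fold condition forces $|T_{\pm\alpha\tau d}(z)| \geq c|\re(d)|\eps^2$ while $|R(z)| \leq C\eps^3$, making the comparison work uniformly in $\alpha \in (0,1]$ once $\eps$ is small.
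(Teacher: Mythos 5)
Your setup, normalization, choice of direction $\widetilde{\eta}=\tau d$, use of \Cref{lem:zerosT}, the orientation computation at $\pm\sqrt{\tau}$, and the lower bound $\min_{\partial B_\eps(0)}|T|\sim|\re(d)|\,\eps^2$ all match the paper's argument (your minimum computation is in fact a cleaner way to organize the paper's rather technical interdependence of $\eps$ and $\delta$). But there is a genuine gap at the central counting step: you apply Rouch\'e only on the full circle $\partial B_\eps(0)$ and on small circles around $\pm\sqrt{\tau}$. The disk $B_\eps(0)$ straddles the critical curve, so it contains both sense-preserving and sense-reversing points, and by \Cref{cor:zero_counting} the winding on $\partial B_\eps(0)$ only controls the \emph{difference} $N_+-N_-$. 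That difference is $0$ for both $T_{+\tau d}$ (one zero of each orientation) and $T_{-\tau d}$ (no zeros), so your comparison on $\partial B_\eps(0)$ cannot distinguish ``exactly two zeros of opposite orientation'' from ``no zeros'' from ``four zeros, two of each kind.'' In particular, your claim that ``a parallel argument shows $f_{\eta-\widetilde{\eta}}$ has no zeros in $B_\eps(0)$'' does not follow: winding $0$ is consistent with any number of cancelling pairs. Likewise, the small circles around $\pm\sqrt{\tau}$ give (after checking they lie in $\Omega_\pm$, which your transversality remark does justify) \emph{at least} one zero near each, but nothing excludes further zeros of $f_{\eta+\widetilde{\eta}}$ elsewhere in $B_\eps(0)$; \Cref{thm:small_perturbation} only controls the region \emph{outside} the disk around the singular zero, which is exactly the region where nothing interesting happens.

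The missing idea is the paper's construction of the composite Jordan curves $\Gamma^\pm=\left(B^\pm,\,B_\eps(0)\cap\CC\right)$, which cut the disk along the critical curve so that $\intx(\Gamma^+)\subseteq\Omega_+$ and $\intx(\Gamma^-)\subseteq\Omega_-$ each have constant orientation; there the winding equals $\pm N$ with no cancellation, and Rouch\'e on $\Gamma^\pm$ pins down the exact counts ($1,1$ for $f_{\eta+\delta^*d}$ and $0,0$ for $f_{\eta-\delta^*d}$). This forces the most delicate estimate of the whole proof, which your write-up omits entirely: a lower bound for $|T_{\pm\delta d}|$ on the critical-curve piece $B_\eps(0)\cap\CC$, parametrized as $g(t)=ht+\mathcal{O}(|t|^2)$ with $h=i\conj{d}/|d|$. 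There $|T|$ is far smaller than on $\partial B_\eps(0)$ (the zeros $\pm\sqrt{\delta}$ sit at distance only $\sim\sqrt{\delta}\,|\re(d)|/|d|$ from $\CC$), and the paper must split into $|t|\le\sqrt{\delta^*/2}$ (bounding via the real part, getting $|T_{\pm\delta^*d}(ht)|\ge|\re(d)|\delta^*/2$) and $\sqrt{\delta^*/2}\le|t|\le\eps$ (via the imaginary part), both hinging on $\re(d)\neq 0$. Without some substitute for this step --- either the estimate on $\CC$, or an independent upper bound on the number of zeros in each of $B_\eps(0)\cap\Omega_\pm$ --- assertions (iii), (iv), and hence (i) are not established.
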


\begin{proof}
We will write $f_\eta$ as in \eqref{eqn:feta1}--\eqref{eqn:feta2}, and
for a given $\delta>0$ we will write $T_{\pm \delta d}(z):=T(z)\mp \delta d$.
Since $f(0)$ is a (simple) fold point, we have $\re(d)\neq 0$; see Lemma~\ref{lem:cusp}.

We know that if $\widetilde{\delta} > 0$ is small enough, then there exists
an $\widetilde{\eps}$, depending on $\widetilde{\delta}$
and with $\widetilde{\eps} > \sqrt{\widetilde{\delta}} >0$,
such that the only zeros of $T_{+\widetilde{\delta} d}$ in the open disk $B_{\widetilde{\eps}}(0)$
are the two real zeros $z=\pm \sqrt{\widetilde{\delta}}$. The function $T_{-\widetilde{\delta} d}$
has no zeros in that disk.
Moreover, by shrinking $\widetilde{\eps}$ and $\widetilde{\delta}$
if necessary, we can assume that $f_{\eta\pm\widetilde{\delta} d}$ has no pole in $B_{\widetilde{\eps}}(0)$, since
$|f_{\eta\pm\widetilde{\delta} d}(0)|=|\widetilde{\delta}d|<\infty$.

The orientation of $T_{+\widetilde{\delta} d}$ is determined by its Jacobian
$$J_{T_{+\widetilde{\delta} d}}(z)=|2dz+1|^2-1.$$
Thus, by possibly shrinking $\widetilde{\delta}>0$ once more, we can assume that
$T_{+\widetilde{\delta} d}$ is differently oriented at its two (real) zeros
$z=\pm\sqrt{\widetilde{\delta}}$.

The main idea now is to suitably choose $\eps$ and $\delta$ with
$\eps>\sqrt{\delta}>0$, by possibly further shrinking the values $\widetilde{\eps}$ and
$\widetilde{\delta}$ obtained above, so that we can successfully apply Theorem~\ref{thm:Rouche}
to $f_{\eta\pm \delta d}$ and $T_{\pm\delta d}$ on the closed Jordan curves
\begin{align}\label{eq:Gamma+-}
\Gamma^+ \defby \left(B^+, B_\eps(0) \cap \mathcal{C}\right) \quad \text { and } \quad \Gamma^- \defby \left(B^-, B_\eps(0) \cap \mathcal{C}\right),
\end{align}
where $B^+ \defby \partial B_\eps(0) \cap \Omega_+$ and $B^- \defby \partial B_\eps(0) \cap \Omega_-$.
Thus, we have to verify that
\begin{align}\label{eq:rouche_tmp}
|f_{\eta\pm \delta d}(z) - T_{\pm \delta d}(z)| = |R(z)|
< |f_{\eta\pm \delta d}(z)| +  |T_{\pm \delta d}(z)|
\end{align}
for all $z \in  \partial B_\eps(0) \cup \left( B_\eps(0) \cap \mathcal{C}\right)$.

The following argument is quite technical since the
constant shift $\pm\delta d$ and the radius $\epsilon$ influence each
other. %An illustration of the construction is given in \Cref{fig:construction}.

In the neighborhood of $z=0$ we have
$$|T(z)| \in \mathcal{O}(|z|) \quad\mbox{and}\quad |R(z)| \in \mathcal{O}(|z|^3),$$
and consequently
\begin{align*}
|R(z)| < |T(z)| \quad\mbox{in $B_{\eps^*}(0)$ for a sufficiently small $\eps^* > 0$.}
\end{align*}
We can assume that $0<\eps^*\leq \widetilde{\eps}$, and we now have to find a corresponding
$\delta^*>0$. To this end we define
\begin{align*}
\eps(\delta) \defby \max \{ 0 \le \varepsilon \le \varepsilon^* : |R(z)| < |T(z)| - \delta |d| 
\text{ for all } z \in \partial B_\varepsilon(0) \},
\end{align*}
which is a continuous function of the real variable $\delta\geq 0$. We have 
$\eps(0)=\eps^*>0$ and
$\lim_{\delta\rightarrow\infty}\eps(\delta)=0$, where $B_0(0)=\emptyset$.
Thus, for every continuous and strictly monotonically increasing function
$$\psi: [0,\infty) \rightarrow [0,\infty)\quad\mbox{with}\quad \psi(0) = 0,$$
there exists a $\delta^* > 0$, such that $\eps(\delta^*) > \psi(\delta^*)$.
Using the function
$\psi(t) \defby \sqrt{t}$ yields parameters $\delta^*$ and $\eps(\delta^*)$ with
$\widetilde{\eps}>\eps(\delta^*)>\sqrt{\delta^*}>0$, so that only
the two real zeros $z_\pm=\pm\sqrt{\delta^*}$ of $T_{+\delta^*d}$ lie in the
disk $B_{\eps(\delta^*)}(0)$, and $T_{+\delta^*d}$ is differently oriented
at these zeros.

For all $z \in \partial B_{\eps(\delta^*)}(0)$ we immediately obtain
\begin{align*}
|f_{\eta\pm \delta^* d}(z) - T_{\pm \delta^* d}(z)| = |R(z)|
< |T(z)| - \delta^*|d|\le |f_{\eta\pm \delta^* d}(z)| +  |T_{\pm \delta^* d}(z)|.
\end{align*}

We also have to verify inequality \eqref{eq:rouche_tmp} on $B_{\eps(\delta^*)}(0) \cap \CC$.
Using \eqref{eq:tangent} with our assumptions $z_0=0$, $r'(0)=1$, and $d=r''(0)/2$, we see that
this curve is given by
$$g(t) = h t + \mathcal{O}(|t|^2),\quad
h \defby \frac{i\conj{d}}{|d|}.$$
A straightforward computation shows that
\begin{align*}
T(ht) =
-\conj{d}t^2  + 2i\frac{\re(d)}{|d|} t.
\end{align*}
For $\eps^*>0$ sufficiently small and $|t| \le \sqrt{\delta^*/2}$ we obtain,
by restricting to the real part,
\begin{align*}
\left|T_{\pm \delta^* d}(h t)\right| &=
\left|-\conj{d}t^2   + 2i\frac{\re(d)}{|d|} t \mp \delta^* d \right| \ge |\re(d)|\cdot|t^2 \pm \delta^*| \\ &\ge  \frac{|\re(d)|\delta^*}{2}
> c_1|t|^3 \ge \left|R(h t)\right|,
\end{align*}
where $c_1>0$ is a real constant, and we have used that $\re(d)\neq 0$.

On the other hand, for $\eps^*>0$ sufficiently small and$\sqrt{\delta^*/2} \le |t| \le \eps(\delta^*)$
we obtain, by restricting to the imaginary part,
\begin{align*}
\left|T_{\pm \delta^* d}(h t)\right| &=\left|-\conj{d}t^2   + 2i\frac{\re(d)}{|d|} t \mp \delta^* d \right| \ge \left|\im(d)(t^2 \mp \delta^*) + 2\frac{\re(d)}{|d|}t \right| \\
&\ge c_2 \sqrt{\frac{\delta^*}{2}} > c_3|t|^3 \ge \left|R(h t)\right|,
\end{align*}
where $c_2,c_3>0$ are real constants. Note that in order to obtain the
second inequality, it is again necessary that $\re(d)\neq 0$.

Together we have
\begin{align}\label{eq:tmp_main_2}
\left|T_{\pm \delta^* d}(h t)\right| >
\left|R(h t)\right|\quad\mbox{for}\quad |t| < \eps(\delta^*).
\end{align}
Clearly, if we do the same computations with
$g(t)=ht+\mathcal{O}(|t|^2)$ instead of $ht$, we obtain
the same estimate as in \eqref{eq:tmp_main_2} for a possibly smaller $\eps^*>0$. Hence,
\begin{align*}
\left|f_{\eta\pm \delta^*d}(g(t))-T_{\pm \delta^*d}(g(t))\right|
&=
\left|R(g(t))\right| < \left|T_{\pm \delta^*d}(g(t))\right| \\
&\le \left|T_{\pm \delta^*d}(g(t))\right| +
\left|f_{\eta\pm \delta^*d}(g(t))\right|
\end{align*}
holds for all $t$ with $|g(t)| \le \eps(\delta^*)$. Consequently, \eqref{eq:rouche_tmp} is fulfilled for all $z \in B_{\eps(\delta^*)}(0) \cap \CC$.

In summary, we can apply Theorem~\ref{thm:Rouche} on $\Gamma^+$ and $\Gamma^-$, see \eqref{eq:Gamma+-}. With Corollary~\ref{cor:zero_counting} this yields
\begin{align*}
\begin{aligned}
N(f_{\eta+\delta^* d}; \intx(\Gamma^+))&=V(f_{\eta+\delta^* d}; \Gamma^+) =
V(T_{+\delta^* d}; \Gamma^+) = 1,\\
N(f_{\eta+\delta^* d}; \intx(\Gamma^-))&=-V(f_{\eta+\delta^* d}; \Gamma^-) =
-V(T_{+\delta^* d}; \Gamma^-) = 1,\\
N(f_{\eta-\delta^* d}; \intx(\Gamma^+))&=V(f_{\eta-\delta^* d}; \Gamma^+) =
V(T_{-\delta^* d}; \Gamma^+)= 0,\\
N(f_{\eta-\delta^* d}; \intx(\Gamma^-))&=-V(f_{\eta-\delta^* d}; \Gamma^-) =
-V(T_{-\delta^* d}; \Gamma^-) = 0.
\end{aligned}
\end{align*}
%
%Recall that for $\delta = 0$ we have $|R(w)| < |T(w)|$ for all nonzero $w \in %B_{\varepsilon^*}(0)$; cf. \eqref{eq:tmp_main_1}. Hence the only zero of
%$f_{\eta}$ in the neighborhood of zero is $z = 0$.
%
Using Lemma~\ref{lem:main} and Theorem~\ref{thm:small_perturbation} (again for possibly smaller $\eps^*>0$), we see that the assertions (iii) and (iv) are fulfilled for
$\alpha=1$ and $\widetilde{\eta} := \delta^* d$. The same argument
as in the proof of Lemma~\ref{lem:main_2} gives assertion (ii), and therefore
also (i) and (v) follow (all for $\alpha=1$).

Finally, the assertions (i)--(v) hold for all $0<\alpha\leq 1$, since
for sufficiently small $\delta > 0$ the line between $+\delta d$
and $-\delta d$ contains only a single caustic point of $f$.
\eop
\end{proof}

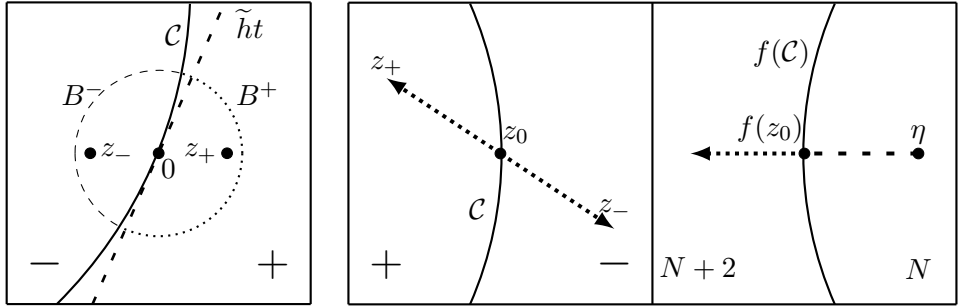
\begin{figure}[!ht]
\centering
\subfigure[Construction in the proof of  Theorem~\ref{thm:main_fold}.]{\label{fig:construction} \begin{tikzpicture}

\draw [thick] plot [smooth] coordinates {(0,0) (4,0)};
\draw [thick] plot [smooth] coordinates {(0,0) (0,4)};
\draw [thick] plot [smooth] coordinates {(4,0) (4,4)};
\draw [thick] plot [smooth] coordinates {(0,4) (4,4)};

\draw[color=black] (1.9,1.8) node[right] {$0$};
\draw[color=black] (1.1,2) node[right] {$z_-$};
\draw[color=black] (2.9,2) node[left] {$z_+$};

\draw[color=black] (3.5,0.2) node[above] {\LARGE $+$};
\draw[color=black] (.5,0.2) node[above] {\LARGE $-$};

\draw[color=black] (3.3,2.5) node[above] { $B^+$};
\draw[color=black] (1,2.5) node[above] { $B^-$};

%\draw[color=black] (1.5,1.3) node[above] { $C$};

\draw[color=black] (2.2,3.3) node[above] {$\mathcal C$};

\draw [dotted,thick] (3.1,2) arc (0:75:1.1);
\draw [dotted,thick] (3.1,2) arc (0:-117:1.1);
\draw [dashed] (0.9,2) arc (180:243:1.1);
\draw [dashed] (0.9,2) arc (180:75:1.1);

\draw [thick] (.66,0) arc (-45:-2.5:6);

\draw[fill=black](2,2)circle(2pt);
\draw[fill=black](1.1,2)circle(2pt);
\draw[fill=black](2.9,2)circle(2pt);
\draw[color=black] (3.2,3.4) node[above] {$\widetilde{h}t $};
\draw [line width=1pt,loosely dashed] (1.13,0) to (2.87,4);

\end{tikzpicture}}
\hfill
\subfigure[Crossing a caustic at a fold point (cf. {\cite[Figures 9.2 and 9.3]{PettersLevineWambsganss2001}}).]{\label{fig:schema_fold}\begin{tikzpicture}
\usetikzlibrary{positioning,arrows}
\tikzset{arrow/.style={-latex}}
\draw [thick] plot [smooth] coordinates {(-4,-2) (4,-2)};
\draw [thick] plot [smooth] coordinates {(-4,-2) (-4,2)};
\draw [thick] plot [smooth] coordinates {(4,-2) (4,2)};
\draw [thick] plot [smooth] coordinates {(-4,2) (4,2)};
\draw [thick] plot [smooth] coordinates {(0,-2) (0,2)};

\draw[color=black] (-2.3,-1) node[above] {$\mathcal C$};
\draw[color=black] (-3.5,-1.8) node[above] {\LARGE $+$};
\draw[color=black] (-.5,-1.8) node[above] {\LARGE $-$};

\draw[color=black] (-.5,-1) node[above] {$z_-$};
\draw [line width=1.5pt,arrow, dotted] (-2,0) to (-.5,-1);
\draw[color=black] (-3.5,.9) node[above] {$z_+$};
\draw [line width=1.5pt,arrow, dotted] (-2,0) to (-3.5,1);
\draw[fill=black](-2,0)circle(2pt);
\draw[color=black] (-1.8,0) node[above] {$z_0$};

\draw[color=black] (1.7,1) node[above] {$f(\mathcal C)$ };
\draw[color=black] (3.5,-1.8) node[above] {$N$};
\draw[color=black] (.6,-1.8) node[above] {$N+2$};

\draw [line width=1.5pt,arrow, dotted] (2,0) to (.5,0);
\draw [line width=1.5pt,loosely dashed] (3.5,0) to (2,0);
\draw[fill=black](3.5,0)circle(2pt);
\draw[color=black] (3.5,0) node[above] {$\eta$};
\draw[fill=black](2,0)circle(2pt);
\draw[color=black] (1.55,0) node[above] {$f(z_0)$};

\draw [thick] (-2.4,-2) arc (-23.5:23.5:5);
\draw [thick] (2.4,-2) arc (203.5:156.5:5);

\end{tikzpicture}}
\caption{Local behavior near fold points.}
\end{figure}

While we have formulated Theorem~\ref{thm:main_fold} for the critical point $z_0=0$
and for $r'(0)=1$, it is clear that the result holds for any $z_0\in\CC$
and the corresponding value $r'(z_0)=e^{i\varphi}$, as long as $\eta=f(z_0)$ is
a simple fold point. For a \emph{multiple} fold point $\eta$, the set of
corresponding critical points $f^{-1}(\eta)$ contains more than one element,
and then the effect of Theorem~\ref{thm:main_fold} happens simultaneously at each of
these critical points. An example can be seen in Figure~\ref{fig:exp_mpw}, where 
one of the caustics has~$9$ double fold points. When the caustic is crossed at 
one of these points in a suitable direction, the number of zeros of the shifted functions changes by~$4$. 

In the proof of Theorem~\ref{thm:main_fold}, the crossing of the caustic at a (simple)
fold point was done in the direction $d$, i.e., we considered a shift on
the line from $\eta -\delta^*d$ to $\eta + \delta^*d$. Using Theorem~\ref{thm:main}, we
easily see that crossing the caustic in any other direction yields the same
conclusion on the zeros of the shifted functions.

An illustration of the \emph{local} behavior near a fold point is given
in \Cref{fig:schema_fold}.
We shift the constant term $\eta$ along the dotted line. Coming from the right, the
function $f_\eta$ has no zero close to the critical point $z_0$. For $\eta=f(z_0)$
there is exactly one (singular) zero of $f_\eta$, and after $\eta$ crossed the
caustic of $f$, a pair of differently oriented zeros of $f_\eta$ appears.

An illustration of the \emph{global} effect of caustic crossings
is shown in \Cref{fig:main_2}.
The plots on the left and in the middle show the critical curves, zeros, and poles
of two functions $f_{\eta_1}$ and $f_{\eta_2}$. On the right we plot the caustics
and one possible path from $\eta_1$ to $\eta_2$. On every path from $\eta_1$ to
$\eta_2$ we have at least three caustic crossings. With each crossing a pair
of zeros in the neighborhood of the corresponding critical point appears or
disappears. In this example we have a net gain of $2$~zeros when traveling
from $\eta_1$ to $\eta_2$, and a net loss of $2$~zeros when traveling in the
other direction.

The effect of $2$~additional or $2$~disappearing zeros is determined by the curvature
of the caustic, which is given by the coefficient $-\conj{d}$ of the quadratic term of $T$,
i.e., the caustic is locally a parabola. We have $2$~additional zeros in case of crossing
the caustic coming from the ``open side'' of the parabola, and $2$~disappearing zeros
coming from the other side; see \Cref{fig:main_2}, \Cref{fig:crossing}, and the 
examples in Section~\ref{sect:examples}.

\begin{figure}[h]
\begin{center}
\includegraphics[width=1\textwidth]{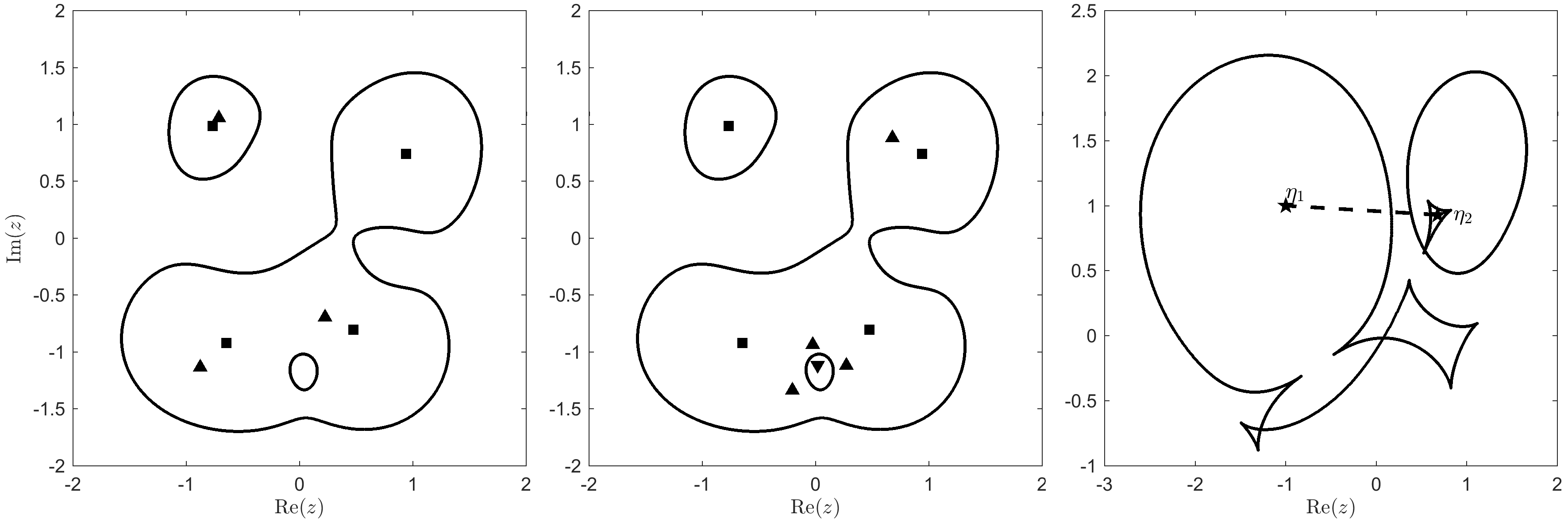}
\caption{Left and middle: Critical curves (solid), sense-preserving zeros
($\blacktriangle$), sense-reversing zeros ($\blacktriangledown$), and poles (\footnotesize$\blacksquare$\normalsize) for $f_{\eta_1}$ (left) and
$f_{\eta_2}$ (middle). Right: Caustics and the corresponding shift (dashed). }\label{fig:main_2}
\end{center}
\end{figure}

\begin{figure}[!ht]
\centering
\subfigure[Relation between caustic curvature and the effect of constant shifts.]{\label{fig:curvature}
\begin{tikzpicture}
\usetikzlibrary{positioning,arrows}
\tikzset{arrow/.style={-latex}}
\draw [thick] plot [smooth] coordinates {(0,0) (4,0)};
\draw [thick] plot [smooth] coordinates {(0,0) (0,4)};
\draw [thick] plot [smooth] coordinates {(4,0) (4,4)};
\draw [thick] plot [smooth] coordinates {(0,4) (4,4)};
\draw[color=black] (2,1.8) node[right] {$\eta$};
\draw[color=black] (1,2.4) node[below] {$-\conj{d}$};
\draw[color=black] (3,2.5) node[below] {$d$};
\draw[color=black] (.7,1.15) node[above] {$\eta -\delta^*d$};
\draw[color=black] (3.3,2.65) node[above] {$\eta + \delta^*d$};
\draw[color=black] (3,0.2) node[above] {$N+2$};
\draw[color=black] (.5,0.2) node[above] {$N$};
\draw[color=black] (1.2,3.3) node[above] {$T(\widetilde{h}t)$};
\draw [thick] (2,2) arc (0:19.5:6);
\draw [thick] (2,2) arc (0:-53:2.5);
\draw [arrow, dotted, thick] (2,2) to (3,2.5);
\draw [arrow, dotted, thick] (2,2) to (1,2.5);
\draw[fill=black](2,2)circle(2pt);
\draw[fill=black](3.5,2.75)circle(2pt);
\draw[fill=black](.5,1.25)circle(2pt);
%\draw[color=black] (3.2,3.4) node[above] {$\widetilde{h}t $};
\draw [line width=1pt,loosely dashed] (2,0) to (2,4);
\end{tikzpicture}}
%\hfill
\subfigure[Avoiding a cusp crossing.]{\label{fig:crossing}
\usetikzlibrary{positioning,arrows}
%\begin{figure}[!ht]
\begin{tikzpicture}
\draw [thick] plot [smooth] coordinates {(3,0) (7,0)};
\draw [thick] plot [smooth] coordinates {(3,4) (3,0)};
\draw [thick] plot [smooth] coordinates {(3,4) (7,4)};
\draw [thick] plot [smooth] coordinates {(7,4) (7,0)};
\draw [thick] plot [smooth] coordinates {(3,1.625) (4,1.75) (5,2) (6,2.5) (6.5,3)};
\draw [thick] plot [smooth] coordinates {(6.25,0) (6,0.5) (6,1.5)(6.3,2.5) (6.5,3)};
\draw [dashed] plot  coordinates {(5.45,1)(6.75,3.5)};
\draw [densely dotted] plot  coordinates {(5.45,1)(3.5,1)(3.5,2.6)(6.75,3.5)};
\draw[fill=black](6.75,3.5)circle(2pt);
\draw[color=black] (6.75,3.6) node[left] {$\eta_1$};
\draw[fill=black](5.45,1)circle(2pt);
\draw[color=black] (5.45,1) node[right] {$\eta_2$};
\draw[fill=black](3.5,2.6)circle(2pt);
\draw[color=black] (3.25,2.4) node[right] {$\eta_{1}'$};
\draw[fill=black](3.5,1)circle(2pt);
\draw[color=black] (3.25,.75) node[right] {$\eta_{2}'$};
\draw[fill=black](3.5,1.7)circle(2pt);
\draw[color=black] (3.5,1.9) node[right] {$\eta$};
\draw[color=black] (6.1,.3) node[left] {caustic};
\end{tikzpicture}}
\caption{}
\end{figure}
%As another consequence of \Cref{thm:main_fold} the difference of sense-preserving and sense-reversing zeros of a regular function is maintained by a constant shift, if the resulting function is regular as well.
%\begin{cor}\label{cor:N+_N-}We have $N_+(f_{\eta_1}) - N_-(f_{\eta_1}) = N_+(f_{\eta_2}) - N_-(f_{\eta_2})$ for all $\eta_1, \eta_2 \not\in f(\CC)$.
%\end{cor}
%\begin{proof}
%From the Theorems \ref{lem:main_2} and \ref{thm:main_fold} we get $N_+(f_{\eta_1}) = N_+(f_{\eta_2}) + k$ and $N_-(f_{\eta_1}) = N_-(f_{\eta_2}) + k$ for some $k \in \Z$, which implies the assertion.
%\end{proof}
We are able to ``simulate'' the crossing of a cusp point using Theorem~\ref{thm:main} and Theorem~\ref{thm:main_fold}; see \Cref{fig:crossing}.
However, we also would like to give a local characterization of a cusp crossing. An important ingredient is the following result of Sheil-Small;
see~\cite[Theorem~14]{Wilmshurst1994}.

\begin{prop}\label{prop:sheil_small}
If $g$ is an analytic function in the convex domain $D$ with $|g'(z)|<1$
in $D$, then $g(z)-\conj{z}$ is univalent in $D$.
\end{prop}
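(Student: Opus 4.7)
The plan is to argue by contradiction, exploiting the convexity of $D$ together with the fundamental theorem of calculus. Set $h(z) \defby g(z)-\conj{z}$ and suppose, for contradiction, that there exist two distinct points $z_1,z_2 \in D$ with $h(z_1)=h(z_2)$. Then
\begin{equation*}
g(z_1)-g(z_2) = \conj{z}_1-\conj{z}_2.
\end{equation*}

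Since $D$ is convex, the segment $\gamma(t) \defby z_2 + t(z_1-z_2)$, $t\in[0,1]$, lies entirely in $D$. Because $g$ is analytic on $D$ and the segment is a smooth path, I would apply the fundamental theorem of calculus along $\gamma$ to obtain
\begin{equation*}
g(z_1)-g(z_2) = (z_1-z_2)\int_0^1 g'(\gamma(t))\,dt \bydef (z_1-z_2)\,\alpha.
\end{equation*}
Writing $w \defby z_1-z_2 \neq 0$, the identity $h(z_1)=h(z_2)$ becomes $\alpha w = \conj{w}$, which forces $|\alpha|=1$.

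On the other hand, the segment $\gamma([0,1])$ is a compact subset of $D$, and $|g'|$ is continuous there. The extreme-value theorem yields $M \defby \max_{t\in[0,1]}|g'(\gamma(t))| < 1$, since by hypothesis $|g'(z)|<1$ for every $z\in D$. Consequently
\begin{equation*}
|\alpha| \le \int_0^1 |g'(\gamma(t))|\,dt \le M < 1,
\end{equation*}
contradicting $|\alpha|=1$. Therefore $h$ is univalent on $D$.

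The only subtle point is the strict inequality $|\alpha|<1$; obtaining it requires combining the open condition $|g'|<1$ with compactness of the segment (which in turn requires convexity of $D$, so that the segment stays inside $D$). Everything else is a direct computation.
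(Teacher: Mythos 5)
Your proof is correct. Note that the paper does not prove this proposition at all: it is quoted as a known result of Sheil-Small, with a citation to Wilmshurst's paper, so there is no in-text argument to compare against. Your argument is the standard elementary one: write $g(z_1)-g(z_2)=(z_1-z_2)\int_0^1 g'(\gamma(t))\,dt$ along the segment (which lies in $D$ by convexity), observe that $h(z_1)=h(z_2)$ forces the averaged derivative $\alpha$ to satisfy $\alpha w=\conj{w}$ and hence $|\alpha|=1$, and derive a contradiction from $|\alpha|\le\int_0^1|g'(\gamma(t))|\,dt<1$. The strict inequality is handled correctly via compactness of the segment and continuity of $|g'|$; alternatively one can note directly that a continuous function on $[0,1]$ that is everywhere less than $1$ has integral less than $1$. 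Every step is justified, and convexity is used exactly where it is needed, namely to keep the segment inside $D$.
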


If $g$ is analytic and $|g'(z)|<1$ in a star domain $D$ with base point $z_0$, then we can apply this proposition on the lines from $z_0$ to any point of $D$, which implies that $g(z)-\conj{z}$ attends the value $g(z_0)-\conj{z}_0$ exactly once in $D$. This fact will be used in the proof of the next theorem. 
%\begin{proof}
%Let $z_0,z_1 \in D$ with $z_0 \neq z_1$. Since $D$ is convex, there exists a %line
%$\psi : [0,1] \rightarrow D$ with $\psi(0) = z_0$ and $\psi(1) = z_1$. %Furthermore,
%since $f$ is sense-reversing on $D$, we have
%\begin{align*}
%|r(z_0) - r(z_0)| &= \left|\int_{0}^{1} r'(\psi(t)) dt \right| \leq
%\int_{0}^{1} |r'(\psi(t))| dt\\
%&< |\psi(0) - \psi(1)| = |z_0 - z_1| = |\conj{z_0} - \conj{z_1}|.
%\end{align*}
%%
%Thus, $r(z_0) - \conj{z_0} \neq r(z_1) - \conj{z_1}$, or $f(z_0)\neq f(z_1)$, %which shows
%that $f$ is univalent on $D$. In particular, $f$ has at most one zero in $D$.
%\end{proof}

\begin{thm}\label{thm:main_cusp}
Let $f$ be as in \eqref{eqn:def_f} with $r'(0)=1$, suppose that the
cusp point $\eta \defby f(0)$ is simple, and let $A_+,A_- \in \mathcal{A}$
be the bordered sets on the critical point~$z = 0$. Then there exist a
nonzero $\widetilde{\eta} \in \C$ and $b_+, b_- \in \{0,1\}$ with $b_+ + b_- = 1$, such that for all $0< \alpha \le 1$
we have
\begin{compactenum}[(i)]
\item{$N(f_{\eta + \alpha\widetilde{\eta}}; A_+)   = N(f_\eta; A_+) - b_+ = N(f_{\eta - \alpha\widetilde{\eta}}; A_+) + 1$,}
\item{$N(f_{\eta + \alpha\widetilde{\eta}}; A_-)  = N(f_\eta; A_-) - b_-  = N(f_{\eta - \alpha\widetilde{\eta}}; A_-) + 1$,}
\item{$N_s(f_{\eta + \alpha\widetilde{\eta}})  = N_s(f_{\eta - \alpha\widetilde{\eta}})   = 0$, and $N_s(f_\eta) = 1$.}
\end{compactenum}
\end{thm}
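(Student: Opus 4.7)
My plan follows the structure of the proof of Theorem~\ref{thm:main_fold}, adapted to the degeneracy $\re(d)=0$ that characterises a cusp (Lemma~\ref{lem:cusp}). Write $f_\eta(z)=T(z)+R(z)$ as in \eqref{eqn:feta1}--\eqref{eqn:feta2} with $d=r''(0)/2=i\beta$, $\beta\in\R\setminus\{0\}$. The fold proof's critical-curve estimate $|T(g(t))|\ge|\re(d)|\cdot|t^2\mp\delta^*|$ collapses here, so $T$ alone is insufficient. I would instead work with the harmonic cubic $P(z)\defby T(z)+ez^3$, where $e\defby r'''(0)/6$; the remainder $R_1(z)\defby R(z)-ez^3=O(|z|^4)$ is then controllable by a $|t|^3$-scale bound on $P$. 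The ``simple cusp'' hypothesis should translate into the genericity $c_3\ne 0$ for the cubic coefficient in the expansion $P(g(t))=c_2 t^2+c_3 t^3+O(t^4)$ computed below. The shift direction is chosen along the cusp axis $ie^{i\varphi/2}=i$ (since $r'(0)=1$), so $\widetilde\eta$ is a real positive multiple of $d$.

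Next, in an explicit cubic analogue of Lemma~\ref{lem:zerosT}, solve $P(z)-\delta\widetilde\eta=0$ by splitting into real and imaginary parts. The real part factors as $x\cdot Q(x,y)=0$, producing two branches: the branch $x=0$ always contains one zero near the origin with $y\sim\delta$, while the branch $Q(x,y)=0$ produces, for $\delta>0$ small, two further zeros with $x\sim\pm\sqrt{c\,\delta}$ and no real solutions for $\delta<0$. Thus $P-\delta\widetilde\eta$ has $3$ zeros near $0$ for small $\delta>0$ (inside the cusp), $1$ for small $\delta<0$ (outside), and only the singular zero $z=0$ at $\delta=0$. Computing $J_f=|r'|^2-1=4\re(dz)+O(|z|^2)$ at these zeros assigns them to $A_+$ or $A_-$; Proposition~\ref{prop:sheil_small}, applied on the local star-shaped piece of $A_-$ (where $|r'|<1$), caps the count of zeros of any nearby shifted function in $A_-\cap B_\eps(0)$ at $\le 1$, which pins down the $2$-to-$1$ split of the three inside-cusp zeros between the two regions.

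With the zeros of $P-\delta\widetilde\eta$ understood, Theorem~\ref{thm:Rouche} transfers the counts to $f_{\eta+\delta\widetilde\eta}$ on the Jordan curves $\Gamma^+,\Gamma^-$ built as in \eqref{eq:Gamma+-}. On $\partial B_\eps(0)$, the estimate $|R_1(z)|<|P(z)-\delta\widetilde\eta|$ for small $\eps$ proceeds essentially as in the fold case. The new content is on $\CC\cap B_\eps(0)$: parametrizing $\CC$ as $g(t)=h t+O(|t|^2)$ with $h=i\conj{d}/|d|$, a Taylor expansion gives $P(g(t))=c_2 t^2+c_3 t^3+O(t^4)$ with $c_2\in i\R$ purely imaginary (by the cusp condition) and $c_3\in\R\setminus\{0\}$ (simple-cusp genericity); the real part alone then supplies $|P(g(t))-\delta\widetilde\eta|\ge|c_3|\,|t|^3(1+o(1))$, which dominates $|R_1(g(t))|=O(|t|^4)$. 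As in the fold proof, couple $\eps(\delta)$ continuously, with a $\delta\mapsto\delta^{1/3}$-type scaling in place of the fold's $\delta^{1/2}$, so that both estimates hold simultaneously, and apply Corollary~\ref{cor:zero_counting} to convert windings into zero counts in $A_\pm\cap B_\eps(0)$. Assertions (i) and (ii) then follow by comparing these counts at $\pm\delta\widetilde\eta$ with the singular count at $\delta=0$; the values $b_+,b_-\in\{0,1\}$ with $b_++b_-=1$ record which of $A_+,A_-$ receives the unmatched zero in the $2$-to-$1$ inside split. Assertion (iii) is immediate from Proposition~\ref{prop:critcaus} applied at $\eta\pm\alpha\widetilde\eta$, and the extension from $\alpha=1$ to all $0<\alpha\le 1$ uses Theorem~\ref{thm:main}, as at the end of the fold proof.

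The principal obstacle is the critical-curve Rouch\'e estimate: the fold's $|\re(d)|$-based lower bound is precisely what fails at a cusp, and rescuing it requires keeping the cubic term inside the model polynomial while verifying $c_3\ne 0$ (the nondegeneracy that ``simple cusp'' should encode). Coupling $\eps$ and $\delta$ across both the $\partial B_\eps$ and $\CC$ estimates with the new $|t|^3$-scale bound is a delicate reprise of the $\eps(\delta)$-argument in Theorem~\ref{thm:main_fold}.
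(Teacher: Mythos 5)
Your plan shares two ingredients with the paper's proof (the local model obtained from the Taylor expansion, and the use of Sheil--Small via Proposition~\ref{prop:sheil_small} to cap the zero count in the sense-reversing star domain at one), but it diverges at the key technical step, and the divergence creates a genuine gap. The paper obtains the outer equalities $N(f_{\eta+\alpha\widetilde{\eta}};A_\pm)=N(f_{\eta-\alpha\widetilde{\eta}};A_\pm)+1$ and assertion (iii) with \emph{no} local cusp analysis at all: it joins $\eta-\alpha\widetilde{\eta}$ to $\eta+\alpha\widetilde{\eta}$ by a path crossing the caustic once at a fold point (Figure~\ref{fig:crossing}) and quotes Theorem~\ref{thm:main_fold} and Lemma~\ref{lem:main_2}. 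More importantly, the paper never performs the Rouch\'e comparison along $\CC\cap B_\eps(0)$ in the cusp case --- which is precisely the estimate you are trying to rescue with the cubic model. Instead it keeps the \emph{quadratic} model $T_{\pm\delta d}$, notes via Lemma~\ref{lem:zerosT} (now with $\re(d^{-1})=0$) that one purely imaginary zero $z_\pm$ sits at distance $O(\delta)$ from the origin, and applies Rouch\'e only on the circle $\partial B_{\widetilde{\eps}}(z_\pm)$ with $\widetilde{\eps}=|z_\pm|$, where $|T_{\pm\delta d}|\ge |d|\widetilde{\eps}^{\,2}$ dominates $|R|=O(\widetilde{\eps}^{\,3})$, and on a fixed circle $\partial B_\eps(0)$; Sheil--Small plus the winding identity \eqref{eq:tmp_cusp} then do the remaining bookkeeping that determines $b_\pm$.

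The gap in your route is the reliance on the genericity $\re(c_3)\neq 0$ in $P(g(t))=c_2t^2+c_3t^3+O(t^4)$. You propose to extract this from the hypothesis that the cusp is ``simple'', but in this paper ``simple'' only means that $f^{-1}(\eta)\cap\CC$ is a singleton (see the discussion of multiple fold points after Theorem~\ref{thm:main_fold}); it encodes no third-order nondegeneracy of the caustic, so your critical-curve lower bound $|P(g(t))-\delta\widetilde{\eta}|\gtrsim|\re(c_3)|\,|t|^3$ is not available from the stated hypotheses, whereas the paper's argument needs no such condition. Two further weak points: the real part of $P(z)-\delta\widetilde{\eta}$ does \emph{not} factor as $x\cdot Q(x,y)$ when $\im(r'''(0))\neq 0$ (there is an $\im(e)y^3$ term), so your enumeration of the three inside-cusp zeros of the cubic model needs repair; and your closing step ``compare the counts at $\pm\delta\widetilde{\eta}$ with the singular count at $\delta=0$'' does not by itself produce values $b_\pm\in\{0,1\}$, since $f_\eta$ has no regular zeros near the origin --- in the paper $b_\pm$ is pinned down by the Sheil--Small bound in $B_\eps(0)\cap\Omega_-$ together with the equality of windings on $\partial B_\eps(0)$, not by a direct comparison with $f_\eta$.
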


\begin{proof}
The equalities
\begin{align*}
N(f_{\eta + \alpha\widetilde{\eta}}; A_+)  &= N(f_{\eta - \alpha\widetilde{\eta}}; A_+) + 1,\\
N(f_{\eta + \alpha\widetilde{\eta}}; A_-)  &= N(f_{\eta - \alpha\widetilde{\eta}}; A_-) + 1,\\
N_s(f_{\eta + \alpha\widetilde{\eta}})  &= N_s(f_{\eta - \alpha\widetilde{\eta}})   = 0,
\end{align*}
already follow from Theorem~\ref{thm:main_fold} and Lemma~\ref{lem:main_2}; see \Cref{fig:crossing}.
In order to show the remaining assertions we now investigate, as in the proof of Theorem~\ref{thm:main_fold}, the functions
$f_{\eta \pm \delta d}$ and $T_{\pm\delta d}$. Since we are in the cusp case, we have
$\re(d)=0$ (see Lemma~\ref{lem:cusp}), and hence the non-real zeros of $T_{\pm\delta d}$ come
into play.

From Lemma~\ref{lem:zerosT} we know that for all $0<\delta< |d^{-2}|$,
the function $T_{+\delta d}$ has the two real zeros $z=\pm\sqrt{\delta}$, while
$T_{-\delta d}$ has no real zeros. Moreover, $T_{+\delta d}$ has the two purely
imaginary zeros
\begin{align}\label{eq:imag_zeros1}
z=-d^{-1}\pm i\sqrt{|d^{-2}|-\delta},
\end{align}
and $T_{-\delta d}$ has the two purely imaginary zeros
\begin{align}\label{eq:imag_zeros2}
z=-d^{-1}\pm i\sqrt{|d^{-2}|+\delta}.
\end{align}
Only one of the two zeros in \eqref{eq:imag_zeros1} and in \eqref{eq:imag_zeros2}
is sufficiently close to $z=0$, and the sign of $\im(d^{-1})$ determines which one
it is: If $\im(d^{-1}) > 0$, then the zero of interest of $T_{\pm\delta d}$ is
\begin{align*}
&z_{\pm} \defby -d^{-1} + i\sqrt{|d^{-2}|\mp \delta}
\end{align*}
since then $|z_{\pm}|< \sqrt{\delta}$, while the other zero satisfies
\begin{align*}
|-d^{-1} - i\sqrt{|d^{-2}|\mp \delta}|\ge |d^{-1}|>\sqrt{\delta}.
\end{align*}
From
\begin{align*}
|\partial_z T_{\pm\delta d}(z_{\pm})| &= |2dz_{\pm} + 1| = |2d(-d^{-1}+i\sqrt{|d^{-2}|\mp\delta}) + 1| \\
&= |-1 + 2di\sqrt{|d^{-2}|\mp\delta}| = |-1 + 2\sqrt{1 \mp \delta |d|^2}|
\end{align*}
we see that $z_{+}$ is a sense-reversing zero of $T_{+\delta d}$, and
$z_{-}$ is a sense-preserving zero of $T_{-\delta d}$. For $\im(d^{-1}) < 0$
we get an analogous result, but then the zeros in \eqref{eq:imag_zeros1} and \eqref{eq:imag_zeros2} change their
roles, i.e., $z=-d^{-1} - i\sqrt{|d^{-2}|\mp \delta}$ is close to zero, and
$z=-d^{-1} + i\sqrt{|d^{-2}|\mp \delta}$ is bounded away from zero.

We will now show that the zero $z_{\pm}$ of $T_{\pm\delta d}$
corresponds to a zero of $f_{\eta \pm \delta d}$ by applying Theorem~\ref{thm:Rouche}
on $\partial B_{\widetilde{\eps}}(z_{\pm})$ for an appropriately chosen
$\widetilde{\eps}>0$. For each $\varphi\in [0,2\pi)$ we have
\begin{align*}
T_{\pm \delta d}(z_{\pm} + \widetilde{\eps}e^{i\varphi})
&= \widetilde{\eps}\left( 2dz_{\pm}e^{i\varphi} + d\widetilde{\eps} e^{2i\varphi} + 2i\im(e^{i\varphi})\right).
\end{align*}
For a sufficiently small $\delta>0$, which determines $z_{\pm}$,
we now set $\widetilde{\eps} \defby |z_{\pm}|$, and we assume that
$2|d|\widetilde{\eps} \le 1$. Then
\begin{align*}
\left|T_{\pm \delta d}(z_{\pm} + \widetilde{\eps}e^{i\varphi})\right|
&\ge \widetilde{\eps}\left( 2\left|\pm |d| \widetilde{\eps}e^{i\varphi} + i\im(e^{i\varphi})\right| - |d|\widetilde{\eps} \right) \\
&\ge \widetilde{\eps}\left( 2\left|\pm |d| \widetilde{\eps}\re(e^{i\varphi}) + (1\pm|d|\widetilde{\eps})i\im(e^{i\varphi})\right| - |d|\widetilde{\eps} \right) \\
&\ge \widetilde{\eps}\left( 2\left|\pm |d| \widetilde{\eps}\re(e^{i\varphi}) + |d|\widetilde{\eps}i\im(e^{i\varphi})\right| - |d|\widetilde{\eps} \right) \\
&\ge \widetilde{\eps}\left(2|d|\widetilde{\eps}  - |d|\widetilde{\eps}\right) = |d|\widetilde{\eps}^2 \\
&> c|z_{\im}+ \widetilde{\eps}e^{i\varphi}|^3 \ge |R(z_{\im}+ \widetilde{\eps}e^{i\varphi})|,
\end{align*}
for some constant $c>0$. Thus, we have
\begin{align*}
|f_{\eta\pm\delta d}(z) - T_{\pm \delta d}(z)| = |R(z)| < |T_{\pm \delta d}(z)| + |f_{\eta\pm\delta d}(z)| \quad \text{ for all } z \in \partial B_{\widetilde{\eps}}(z_{\pm}).
\end{align*}
Using Theorem~\ref{thm:Rouche} gives
$$V(f_{\eta \pm \delta d};\partial B_{\widetilde{\eps}}(z_{\pm})) = V(T_{\pm \delta d};
\partial B_{\widetilde{\eps}}(z_{\pm})),$$
and, as a consequence,
$$N(f_{\eta\pm \delta d}; B_{\widetilde{\eps}}(z_{\pm})) = N(T_{\pm \delta d}; B_{\widetilde{\eps}}(z_{\pm})) = 1.$$
In the following we denote by $\widetilde{z}_{\pm}$ the zero of $f_{\eta\pm\delta d}$ corresponding to the zero $z_{\pm}$ of $T_{\pm \delta d}$. By construction, $\widetilde{z}_{+}$ is a sense-reversing zero of $f_{\eta+\delta^*d}$ and $\widetilde{z}_{-}$ is a sense-preserving zero of $f_{\eta-\delta^*d}$.

We now construct $\eps, \delta > 0$ such that we can apply Theorem~\ref{thm:Rouche} on $\partial B_\eps(0)$ and the zeros $\widetilde{z}_\pm$ of $f_{\eta \pm \delta d}$ are in $B_\eps(0)$. Let $\eps > 0$ be such that $f_\eta(z) \neq 0 $ for all $z \in B_\eps(0) \setminus\{0\}$, and $f_\eta$ has no poles in $B_\eps(0)$. Furthermore we define 
\begin{align*}
\widetilde{\delta} \defby \min\left\{\frac{|f_\eta(z)|}{|d|} : z \in \partial B_\eps(0)\right\} \quad \text{ and } \quad \delta \defby \frac{1}{2}\min\{\widetilde{\delta},\eps^2\}.
\end{align*}
Hence, we have
$$
|f_\eta(z) - f_{\eta \pm \delta d}(z)| = \delta|d| < |f_\eta(z)| + |f_{\eta\pm \delta d}(z)| \quad \text{ for all } z \in \partial B_\eps(0).
$$
With Theorem~\ref{thm:Rouche} we get 
\begin{align}\label{eq:tmp_cusp}
V(f_{\eta\pm\delta d};\partial B_\eps(0)) = V(f_{\eta};\partial B_\eps(0)) \quad \text{ and } \quad \widetilde{z}_\pm \in B_\eps(0).
\end{align}

Now we look at the number of zeros of $f_{\eta\pm \delta d}$. 
Since $B_{\eps}(0) \cap \Omega_-$ is a star domain with base point
$\widetilde{z}_{+}$, the function $f_{\eta +\delta d}$
has no other zero than $\widetilde{z}_{+}$ in this domain; see Proposition~\ref{prop:sheil_small} and its discussion. Consequently, the function
$f_{\eta-\delta d}$, which results from crossing the caustic of $f$ through the
cusp point $\eta$, has either no ($b_- = 0$) or two ($b_-=1$) zeros in
$B_{\eps}(0) \cap \Omega_-$; cf. Theorem~\ref{thm:main_fold}. Furthermore, because of \eqref{eq:tmp_cusp}, the function
$f_{\eta-\delta d}$ has either two ($b_+ = 1$) or no ($b_+ = 0$) fewer zeros
than $f_{\eta +\delta d}$ in $B_{\eps}(0) \cap \Omega_+$. Together this
implies the remaining equalities in (i) and (ii) for $\alpha = 1$.

Finally, the assertions (i)--(v) hold for all $0<\alpha\leq 1$, since
for sufficiently small $\delta > 0$ the line between $+\delta d$
and $-\delta d$ contains only a single caustic point of $f$.
\eop
\end{proof}

It is clear that Theorem~\ref{thm:main_cusp} holds for an arbitrary $z_0 \in \mathcal C$, as long as $f(z_0)$ is a simple cusp point. For multiple points the effect happens again simultaneously at all corresponding critical points.

A cusp crossing is illustrated in \Cref{fig:schema_cusp}. We shortly describe the positive case. The constant term $\eta$ is shifted along the dotted line. Coming from the right the function has only one sense-preserving zero close to $z_0$. When $\eta$ reaches the caustic, the unique zero becomes singular. When $\eta$ crosses the caustic, the initial zero crosses the critical curve and thus changes the orientation, i.e., it is now sense-reversing. Furthermore an additional pair of sense-preserving zeros appears. Hence we have three zeros after the caustic crossing. The same happens in the negative case with the reverse orientation.

Finally, it is worth to point out that our results yield a characterization of the Poincar\'e index
of a singular zero. 

\begin{cor}\label{cor:singular}
If $f$ is as in \eqref{eqn:def_f}, and $z_0\in\C$ is a singular zero of $f$, then 
\begin{align*}
\ind(f;z_0) = \begin{cases}
0, \quad &\text{ if } \,\, 0 \,\, \text{ is a fold point,} \\
\pm 1, \quad &\text{ if } \,\, 0 \,\, \text{ is a cusp point.}
\end{cases}
\end{align*}
\end{cor}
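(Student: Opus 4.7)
The plan is to compute $\ind(f;z_0)$ by evaluating the winding $V(f;\Gamma)$ on a small Jordan curve $\Gamma$ around $z_0$, and to use Theorems~\ref{thm:main_fold} and~\ref{thm:main_cusp} together with Rouch\'e's theorem to transfer this winding to a nearby regular shifted function $f_{\widetilde{\eta}}$, whose zeros in $B_\eps(z_0)$ can then be counted with signs via Corollary~\ref{cor:zero_counting}.

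First, since $f$ has only finitely many exceptional points (Khavinson--Neumann), I can choose $\eps > 0$ so that $\overline{B_\eps(z_0)}$ contains no zero of $f$ other than $z_0$ and no pole of $f$; set $\Gamma \defby \partial B_\eps(z_0)$, so that by definition $\ind(f;z_0) = V(f;\Gamma)$. Next, I apply Theorem~\ref{thm:main_fold} (fold case) or Theorem~\ref{thm:main_cusp} (cusp case) with $\eta \defby f(z_0) = 0$; each furnishes a nonzero direction $\widetilde{\eta}\in\C$ that describes the zero structure of $f_{\alpha\widetilde{\eta}} = f - \alpha\widetilde{\eta}$ near $z_0$ for all $0 < \alpha \le 1$. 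By further shrinking $\alpha$ (equivalently, rescaling $\widetilde{\eta}$), I can simultaneously arrange that $|\widetilde{\eta}| < \min_{z\in\Gamma}|f(z)|$ and that all the regular zeros of $f_{\widetilde{\eta}}$ produced by these theorems lie inside $B_\eps(z_0)$.

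With these choices, $|f(z) - f_{\widetilde{\eta}}(z)| = |\widetilde{\eta}| < |f(z)| \le |f(z)| + |f_{\widetilde{\eta}}(z)|$ on $\Gamma$, so Theorem~\ref{thm:Rouche} gives $V(f;\Gamma) = V(f_{\widetilde{\eta}};\Gamma)$. Since $f_{\widetilde{\eta}}$ has no singular zero and no pole in $B_\eps(z_0)$, Corollary~\ref{cor:zero_counting} yields
$$V(f_{\widetilde{\eta}};\Gamma) = N_+(f_{\widetilde{\eta}};B_\eps(z_0)) - N_-(f_{\widetilde{\eta}};B_\eps(z_0)).$$
In the fold case, Theorem~\ref{thm:main_fold} shows $f_{\widetilde{\eta}}$ has exactly one zero in each of $A_+ \cap B_\eps(z_0)$ and $A_- \cap B_\eps(z_0)$, one sense-preserving and one sense-reversing, giving winding $1-1 = 0$. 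In the cusp case, Theorem~\ref{thm:main_cusp} (with $b_+ + b_- = 1$) shows $f_{\widetilde{\eta}}$ has three regular zeros in $B_\eps(z_0)$, distributed as $(2,1)$ or $(1,2)$ across $A_+$ and $A_-$; since $A_+\subset\Omega_+$ and $A_-\subset\Omega_-$ have opposite orientation, this gives winding $\pm 1$.

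The main technical obstacle is the compatibility between $\eps$ and $\widetilde{\eta}$: $|\widetilde{\eta}|$ must be small enough both for Rouch\'e to apply on $\Gamma$ (i.e., smaller than $\min_\Gamma|f|$) and for all the newly appearing zeros promised by Theorems~\ref{thm:main_fold} and~\ref{thm:main_cusp} to actually lie inside $B_\eps(z_0)$ rather than escaping out through $\Gamma$. This can be handled by fixing $\eps$ first and then applying the theorems with $\alpha$ correspondingly small; the continuity/monotonicity argument used inside the proof of Theorem~\ref{thm:main_fold} (constructing $\eps(\delta)$ with $\lim_{\delta\to\infty}\eps(\delta)=0$) already demonstrates that such a simultaneous choice exists. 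Once this compatibility is secured, the computation above gives $\ind(f;z_0) \in \{0\}$ in the fold case and $\ind(f;z_0)\in\{-1,+1\}$ in the cusp case, as claimed.
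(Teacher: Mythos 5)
Your proposal is correct and follows essentially the same route as the paper: both fix a small disk around $z_0$, use Rouch\'e's theorem to transfer the winding of $f$ to a regular nearby shift $f_{\widetilde{\eta}}$, and then read off $N_+ - N_-$ from the local zero configurations established in the proofs of Theorems~\ref{thm:main_fold} and~\ref{thm:main_cusp}. Your explicit handling of the compatibility between $\eps$ and $|\widetilde{\eta}|$, and the signed count via Corollary~\ref{cor:zero_counting}, simply spell out details the paper leaves implicit.
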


\begin{proof}
Let $z_0 \in \C$ be a singular zero of $f$ and choose some $\eps > 0$, such
that $f$ has no other zero in $\overline{B_\eps(z_0)}$. Using the same idea as 
in the proof of Theorem~\ref{thm:small_perturbation}, we define
$$\delta \defby \min\{|f(z)| : z \in \partial B_{\eps}(z_0)\}.$$
Then for each $\eta \in B_\delta(0)$ we have
$$|f(z) - f_\eta(z)| = |\eta| < \delta \le |f(z)| \le |f(z)| + |f_\eta(z)|$$
for all $z \in \partial B_{\eps}(z_0)$, and Theorem~\ref{thm:Rouche} implies that
$$V(f;\partial B_\eps(z_0)) = V(f_{\eta};\partial B_\eps(z_0)).$$
The assertion now follows from the proofs of Theorems \ref{thm:main_fold} (fold case) and \ref{thm:main_cusp} (cusp case); see also the Figures \ref{fig:schema_fold} 
and \ref{fig:schema_cusp}.
\eop
\end{proof}

The cases $+1$ or $-1$, i.e., for positive or negative cusps, are determined by $b_+$ and $b_-$ in Theorem~\ref{thm:main_cusp}.
We see that the Poincar\'e index of a singular zero $z_0$ is the sum of the Poincar\'e indices 
of the regular zeros merging in $z_0$. Recently the Poincar\'e index of singular zeros of harmonic functions $h(z) - \conj{z}$, 
with a general analytic function $h$, were studied in \cite{LuceSete2017} using the power series of $h$. However, a characterization whether the index is $+1$ or $-1$ in the cusp case is pointed out as future work.

\begin{figure}[!ht]
\centering
\subfigure[Crossing a positive cusp (cf. {\cite[Figure 9.7]{PettersLevineWambsganss2001}}).]{
\begin{tikzpicture}
\usetikzlibrary{positioning,arrows}
\tikzset{arrow/.style={-latex}}
\draw [thick] plot [smooth] coordinates {(-4,-2) (4,-2)};
\draw [thick] plot [smooth] coordinates {(-4,-2) (-4,2)};
\draw [thick] plot [smooth] coordinates {(4,-2) (4,2)};
\draw [thick] plot [smooth] coordinates {(-4,2) (4,2)};
\draw [thick] plot [smooth] coordinates {(0,-2) (0,2)};

\draw[color=black] (-2.4,-1.25) node[below] {$\CC$};
\draw[color=black] (-3.7,2) node[below] {\LARGE $-$};
\draw[color=black] (-.5,-2) node[above] {\LARGE $+$};
\draw[color=black] (-1.95,1.4) node[above] {$z_1$};
\draw [line width=1.5pt,arrow, dotted] (-1.95,0) to (-1.95,1.5);
\draw[color=black] (-1.95,-1.4) node[below] {$z_2$};
\draw [line width=1.5pt,arrow, dotted] (-1.95,0) to (-1.95,-1.5);
\draw [line width=1.5pt,arrow, dotted] (-2,0) to (-3.5,0);
\draw [line width=1.5pt, loosely dashed] (-2,0) to (-.3,0);
\draw[color=black] (-3.4,0) node[left] {$z$};
\draw[color=black] (-2.25,0) node[above] {$z_0$};
\draw[fill=black](-1.95,0)circle(2pt);
\draw [thick] (-2.4,-2) arc (-23.5:23.5:5);

\draw[fill=black](3.5,0)circle(2pt);
\draw[fill=black](2.5,0)circle(2pt);
\draw[color=black] (2.5,0) node[above] {$f(z_0)$};
\draw[color=black] (3.5,0) node[above] {$\eta$};
\draw[color=black] (3.5,2) node[below] {$N$};
\draw[color=black] (.6,-.7) node[above] {$N+2$};
\draw [line width=1.5pt, loosely dashed] (2.5,0) to (3.5,0);
\draw [line width=1.5pt,arrow, dotted] (2.5,0) to (.5,0);
\draw [thick] (2.5,0) arc (270:180:2);
\draw [thick] (2.5,0) arc (90:180:2);
\draw[color=black] (0.5,-1.5) node[right] {$f(\CC)$};
\end{tikzpicture}}
\\
\subfigure[Crossing a negative cusp (cf. {\cite[Figure 9.8]{PettersLevineWambsganss2001}}).]{
\begin{tikzpicture}
\usetikzlibrary{positioning,arrows}
\tikzset{arrow/.style={-latex}}
\draw [thick] plot [smooth] coordinates {(-4,-2) (4,-2)};
\draw [thick] plot [smooth] coordinates {(-4,-2) (-4,2)};
\draw [thick] plot [smooth] coordinates {(4,-2) (4,2)};
\draw [thick] plot [smooth] coordinates {(-4,2) (4,2)};
\draw [thick] plot [smooth] coordinates {(0,-2) (0,2)};

\draw[color=black] (-2.4,-1.25) node[below] {$\CC$};
\draw[color=black] (-3.7,2) node[below] {\LARGE $+$};
\draw[color=black] (-.5,-2) node[above] {\LARGE $-$};
\draw[color=black] (-1.95,1.4) node[above] {$z_1$};
\draw [line width=1.5pt,arrow, dotted] (-1.95,0) to (-1.95,1.5);
\draw[color=black] (-1.95,-1.4) node[below] {$z_2$};
\draw [line width=1.5pt,arrow, dotted] (-1.95,0) to (-1.95,-1.5);
\draw [line width=1.5pt,arrow, dotted] (-2,0) to (-3.5,0);
\draw [line width=1.5pt, loosely dashed] (-2,0) to (-.3,0);
\draw[color=black] (-3.4,0) node[left] {$z$};
\draw[color=black] (-2.25,0) node[above] {$z_0$};
\draw[fill=black](-1.95,0)circle(2pt);
\draw [thick] (-2.4,-2) arc (-23.5:23.5:5);

\draw[fill=black](3.5,0)circle(2pt);
\draw[fill=black](2.5,0)circle(2pt);
\draw[color=black] (2.5,0) node[above] {$f(z_0)$};
\draw[color=black] (3.5,0) node[above] {$\eta$};
\draw[color=black] (3.5,2) node[below] {$N$};
\draw[color=black] (.6,-.7) node[above] {$N+2$};
\draw [line width=1.5pt, loosely dashed] (2.5,0) to (3.5,0);
\draw [line width=1.5pt,arrow, dotted] (2.5,0) to (.5,0);
\draw [thick] (2.5,0) arc (270:180:2);
\draw [thick] (2.5,0) arc (90:180:2);
\draw[color=black] (0.5,-1.5) node[right] {$f(\CC)$};
\end{tikzpicture}}
\caption{Crossing a caustic at a cusp point.}\label{fig:schema_cusp}
\end{figure}
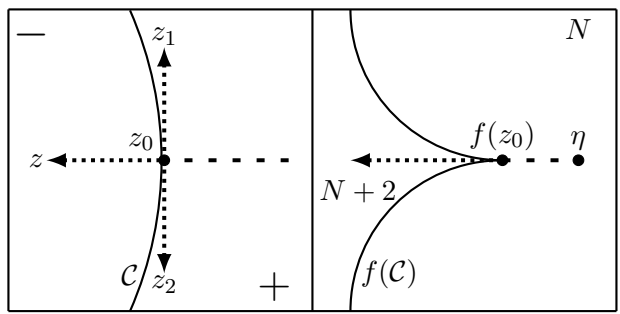
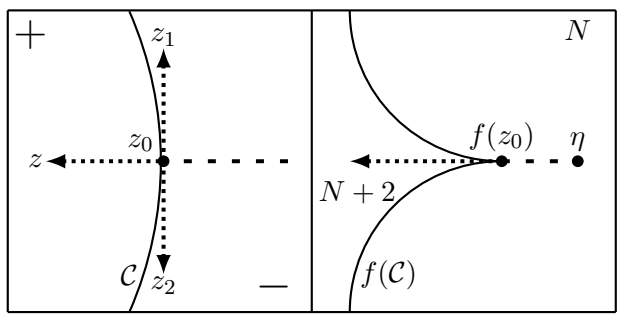 
\section{Examples and outlook}\label{sect:examples}

Let us give some examples that illustrate the results of the previous sections. 

First we consider the function 
$$
f(z) = \frac{z^{n-1}}{z^n - \rho^n} - \conj{z}
$$
for some $\rho > 0$. 
Functions of this form have been frequently studied in the context of gravitational 
lensing; see, e.g., the original work of Mao, Petters and Witt~\cite{MaoPettersWitt1997}, and the more 
recent articles~\cite{LuceSeteLiesen2014a,SeteLuceLiesen2015b}, which contain many further references. We 
choose $n = 3$ and $\rho = \frac{3}{5}$, and plot the zeros of $f_{\eta}$ for several constant shifts 
$\eta$ in \Cref{fig:exp_mpw}.

\begin{figure}[ht]
\begin{center}
\includegraphics[width=\textwidth]{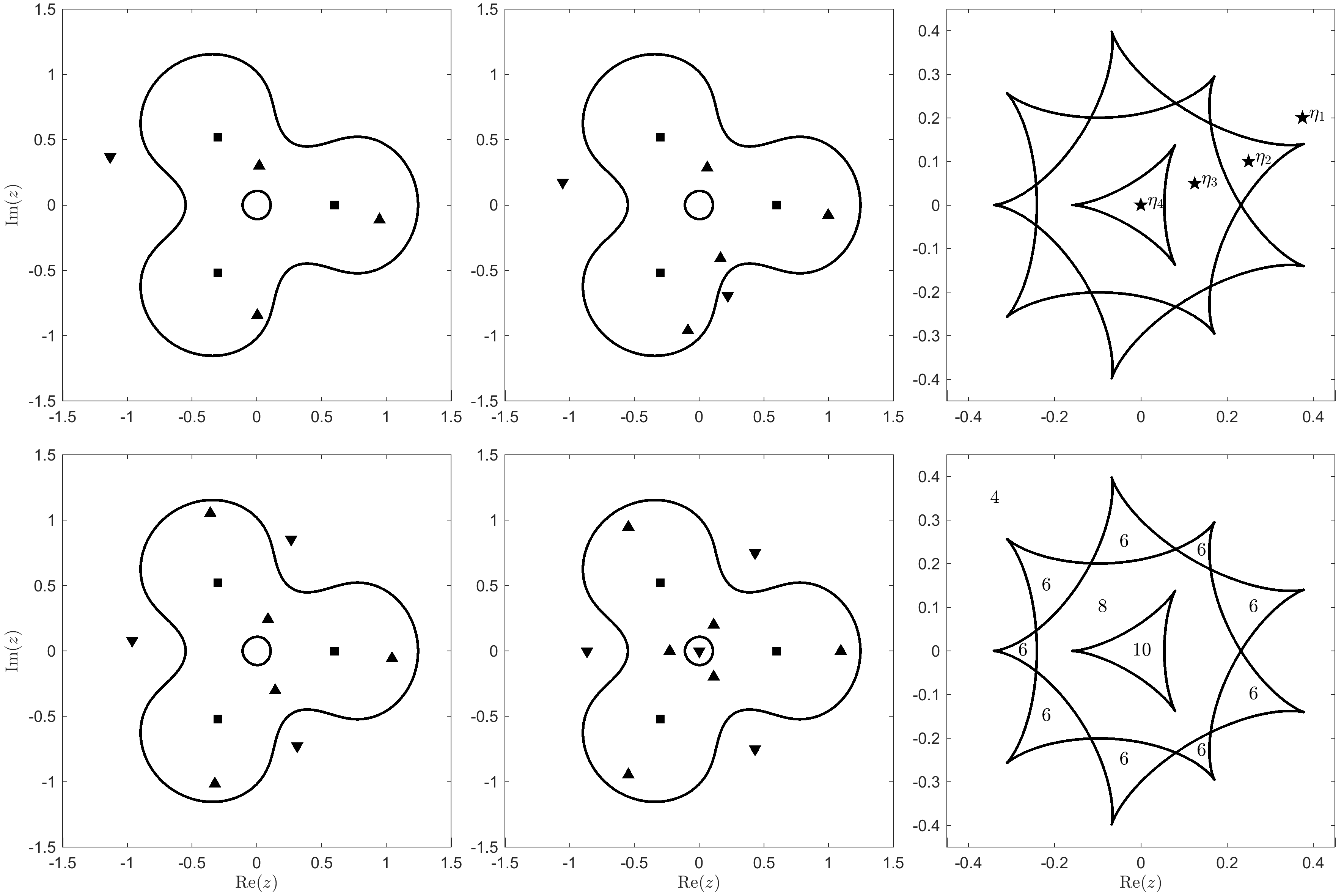}
\caption{Critical curves, zeros ($\blacktriangle$,$\blacktriangledown$) and poles (\footnotesize$\blacksquare$\normalsize) of $f_{\eta_1}$ (top left), $f_{\eta_2}$ (top mid), $f_{\eta_3}$ (bottom left) and $f_{\eta_4}$ (bottom mid); caustics (top 
right) and the number of zeros depending on the constant term $\eta_j$ (bottom right).}\label{fig:exp_mpw}
\end{center}
\end{figure}

We know from Theorem~\ref{thm:initial_value}, that for $|\eta| \gg 1$ the function $f_{\eta}$ has $3$ zeros close to its $3$ poles, 
and one zero in the set $A_\infty$. This can be observed for the shift $\eta_1$. The shift from $\eta_1$ to $\eta_2$ results 
in a caustic crossing with one additional pair of zeros (one sense-preserving and one sense-reversing) appearing at the outer critical 
curve, as predicted by Theorem~\ref{thm:main_fold} and the curvature of the caustic. The same happens when shifting from $\eta_2$ 
to $\eta_3$. Finally, the shift from $\eta_3$ to $\eta_4$ results in an additional pair of zeros at the inner critical curve.
Note that $N(f_{\eta_4}) = 10 = 5n-5$. Hence $f_{\eta_4}$ is an extremal rational harmonic function,
and it has $6=4n-6$ more zeros than $f_{\eta_1}$; cf. Remark~\ref{rmk:extremal}.

It was shown in~\cite[Theorem 3.1]{LuceSeteLiesen2014b} (see also~\cite[Theorem~3.5]{LiesenZur2018}), 
that an extremal rational harmonic function is always regular,
i.e., has no singular zeros. Our results in \Cref{sect:main_section_2} yield the following slight generalization.

\begin{lem}\label{lem:maximal_function}
Let $f$ be as in \eqref{eqn:def_f}, and suppose that there exists an $\eps>0$ with $N(f) \ge N(f_\eta)$ for 
all $\eta \in B_\eps(0)$. Then $f$ is regular.
\end{lem}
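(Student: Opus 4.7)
The plan is to prove the contrapositive: assume that $f$ has a singular zero $z_0$, and exhibit for every $\eps>0$ some $\eta\in B_\eps(0)$ with $N(f_\eta)>N(f)$. Since $z_0\in\CC$ and $f(z_0)=0$, Proposition~\ref{prop:critcaus} places $0$ on the caustic $f(\CC)$, and Lemma~\ref{lem:cusp} classifies it as either a fold point or a cusp point of~$f$. Translating by $-z_0$ and rotating so that $r'(0)=1$---neither of which affects any zero counts---reduces us to the setting $z_0=0$, $\eta:=f(0)=0$ of Theorems~\ref{thm:main_fold} and~\ref{thm:main_cusp}.

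If $0$ is a fold point, Theorem~\ref{thm:main_fold}(i) immediately furnishes a nonzero $\widetilde{\eta}\in\C$ with $N(f_{\alpha\widetilde{\eta}})=N(f)+1$ for every $0<\alpha\le 1$; picking $\alpha>0$ so small that $\alpha\widetilde{\eta}\in B_\eps(0)$ contradicts the assumed inequality $N(f)\ge N(f_\eta)$. If $0$ is a cusp point, I will invoke Theorem~\ref{thm:main_cusp} to obtain a direction $\widetilde{\eta}$ together with $b_+,b_-\in\{0,1\}$ satisfying $b_++b_-=1$. Combining its local formulas for $N(f_{\pm\alpha\widetilde{\eta}};A_\pm)$ with the invariance of the zero counts in all other components of $\Aa$ (which follows from Lemma~\ref{lem:main_2} applied to a path from $0$ to $\pm\alpha\widetilde{\eta}$ that, for sufficiently small $\alpha$, meets no other caustic) and accounting for the vanishing of the singular zero once $\eta$ leaves the caustic, one of the two directions $\pm\widetilde{\eta}$---the one that crosses the caustic into the ``interior'' of the cusp---yields $N(f_{\alpha\widetilde{\eta}})>N(f)$ for all sufficiently small $\alpha>0$, again contradicting the hypothesis.

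The technical heart of the argument lies in the cusp case: the singular zero contributes $1$ to $N(f)$ and is lost for any small shift off the caustic, so the pair of regular zeros appearing on the ``interior'' side of the cusp must strictly outweigh that loss. The key ingredient is the identity $b_++b_-=1$ from Theorem~\ref{thm:main_cusp}, which guarantees that the counts in $A_+$ and $A_-$ change by unequal amounts along $+\widetilde{\eta}$ and $-\widetilde{\eta}$; this asymmetry, together with the disappearance of the singular zero, produces the required strict gain in exactly one of the two directions and completes the contradiction.
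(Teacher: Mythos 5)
Your proof is correct and follows essentially the same route as the paper's: argue by contraposition, note that a singular zero of $f$ forces $0$ to be a fold or cusp point of the caustic, and invoke Theorems~\ref{thm:main_fold} and~\ref{thm:main_cusp} to produce an arbitrarily small shift $\eta$ with $N(f_\eta) > N(f)$. The paper's own proof is just a two-line appeal to those theorems, so your extra bookkeeping in the cusp case (tracking $b_\pm$ and the disappearance of the singular zero) merely spells out what the paper leaves implicit.
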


\begin{proof}
The function $f$ is singular if and only if $z=0$ is a caustic (fold or cusp) point of $f$. By the 
Theorems \ref{thm:main_fold} (fold case) and \ref{thm:main_cusp} (cusp case) there exist some $\eta \in \C$ 
such that $f_\eta$ has at least one additional zero, which contradicts the assumption $N(f) \ge N(f_\eta)$.
\eop
\end{proof}

Since an extremal rational harmonic function $f$ satisfies $N(f) \ge N(f_\eta)$ for all $\eta\in\C$,
Lemma~\ref{lem:maximal_function} immediately implies that $f$ must be regular. On the other hand, if $f$ is singular, then for every
$\eps > 0$ there must exist an $\eta \in B_\eps(0)$, such that $f_\eta$ is regular and $N(f_\eta) \le N(f)$.

\begin{figure}[ht]
\begin{center}
\includegraphics[width=\textwidth]{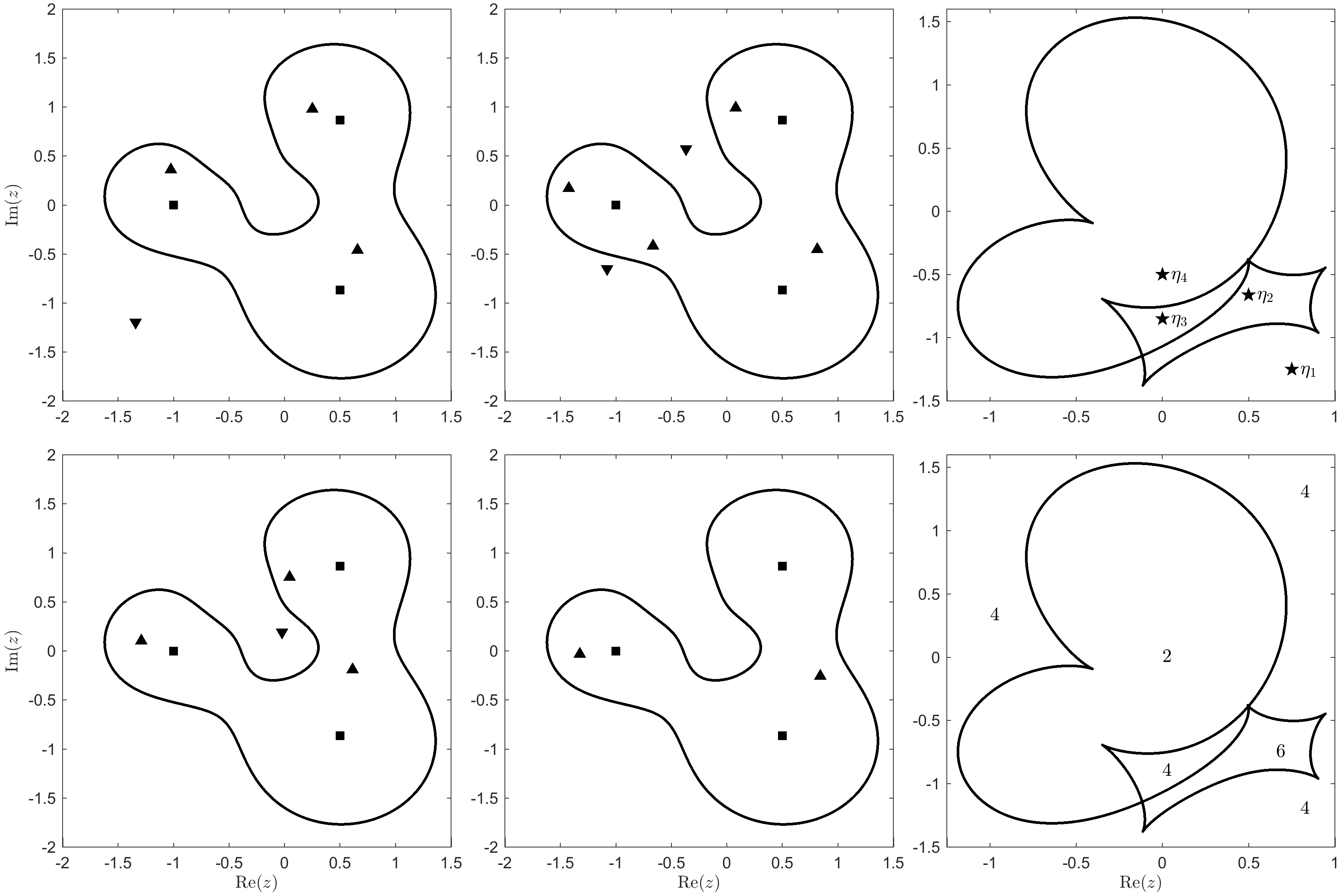}
\caption{Critical curves, zeros ($\blacktriangle$,$\blacktriangledown$) and poles (\footnotesize$\blacksquare$\normalsize) of $f_{\eta_1}$ (top left), $f_{\eta_2}$ (top mid), $f_{\eta_3}$ (bottom left) and $f_{\eta_4}$ (bottom mid); caustics (top 
right) and the number of zeros depending on the constant term $\eta_j$ (bottom right).}\label{fig:exp_new}
\end{center}
\end{figure}

As another example we consider
$$
f(z) = \frac{(1+i)z^2 - i}{z^3 + 1} - \conj{z},
$$
and plot the results in \Cref{fig:exp_new}. For $\eta_1$ we again have $3$ zeros close to the $3$ poles and one zero 
in $A_\infty$, as shown by Theorem~\ref{thm:initial_value}. The first caustic crossing from $\eta_1$ to $\eta_2$ results in 
one additional pair of zeros, but due to the curvature of the caustic, the shift from $\eta_2$ to $\eta_3$ reverses this 
effect. The last shift from $\eta_3$ to $\eta_4$ results again in two fewer zeros due to the curvature of the caustic,
giving $N(f_{\eta_4}) = 2$. Since $N_-(f_{\eta_4}) = 0$, we have a rational harmonic function with the \emph{minimal 
number of zeros}. (For $r=p/q$ with $\deg(p)\leq\deg(q)$ this number is $\deg(q)-1$, which can be easily proved using 
the argument principle.)

\medskip

Finally, we would like to mention that most of our theory in this paper can be extended from rational to general 
analytic functions, i.e., to functions of the form $f(z)=h(z)-\conj{z}$ with~$h$ being (locally) analytic. This is because
the derivation of our main results is based on the local Taylor series, and in the more general case we
we could start from $$h(z)=h'(z_0)(z-z_0)+\frac12 h''(z_0)(z-z_0)^2+ R(z,z_0).$$ A similar approach 
has recently been used in~\cite{LuceSete2017}. 

Another interesting extension would be to consider
rational harmonic functions of the form $r_1(z)-\conj{r_2(z)}$ with both $r_1$ and $r_2$ rational. We are
not aware of any general results on the zeros of such functions.

\paragraph*{Acknowledgements}
We thank Seung-Yeop Lee for sending us a pdf-file of~\cite{Wilmshurst1994}.

\footnotesize
\bibliography{literature}
\bibliographystyle{siam}

\end{document}